\theoremstyle{plain} \newtheorem{teo}{Theorem}[section]
\theoremstyle{definition} \newtheorem{defi}[teo]{Definition}
\theoremstyle{plain} \newtheorem{prop}[teo]{Proposition}
\theoremstyle{plain} \newtheorem{lema}[teo]{Lemma}
\theoremstyle{plain} \newtheorem{cor}[teo]{Corollary}
\theoremstyle{remark} \newtheorem{obs}[teo]{Remark}
\theoremstyle{remark} \newtheorem{ej}[teo]{Example}
\newcommand{\R}{\mathcal{R}}
\newcommand{\E}{\mathcal{E}}
\newcommand{\Ex}{\mbox{Ext}^1}
\newcommand{\F}{\mbox{Fac}}
\newcommand{\Hom}{\mbox{Hom}}
\newcommand{\mo}{\mbox{mod}}
\newcommand{\ad}{\mbox{add}}
\newcommand{\sti}{\mbox{s}\tau -\mbox{tilt}}
\begin{document}

\title[$\tau$-tilting modules over one-point extensions by a projective module]
{$\tau$-tilting modules over one-point extensions by a projective module}

\author[Suarez Pamela ]{Pamela Suarez}
\address{ Departamento de Matem\'atica, Facultad de Ciencias Exactas y
Naturales, Funes 3350, Universidad Nacional de Mar del Plata, 7600 Mar del
Plata, Argentina}
\email{pamelaysuarez@gmail.com}

\thanks{The author thankfully acknowledge partial support from CONICET
and from Universidad Nacional de Mar del Plata, Argentina. The results of this article are part of the PhD thesis of the author under the supervision of Sonia Trepode and Claudia Chaio. She  is grateful to them for their constant support and helpful
discussions.}

\maketitle

\begin{abstract}
Let $A$ be the one point extension of  an algebra $B$ by a projective $B$-module.  We prove that the extension of a given  support $\tau$-tilting $B$-module  is a support $\tau$-tilting $A$-module; and, conversely, the restriction of a given  support $\tau$-tilting $A$-module  is a support $\tau$-tilting $B$-module. Moreover, we prove that there exists a full embedding of quivers between the corresponding poset of support $\tau$-tilting modules.
\end{abstract}

\section*{Introduction}
Tilting theory plays an important role in Representation Theory of finite dimensional algebras. In particular,  the tilting modules were introduced in the early eighties, see for example  \cite{B, BB, HR}. The mutation process is an essential concept in tilting theory. The basic idea of a mutation is to replace an indecomposable direct summand of a tilting module by another indecomposable module in order to obtain a new tilting module. In that sense, any almost complete tilting module is a direct summand of at most two tilting modules, but it is not always exactly two. The mutation process is possible only when we have two complements. This suggests  to consider a larger class of objects. In \cite{AIR}, T. Adachi, O. Iyama and I. Reiten introduced  a class of modules called support $\tau$-tilting modules, which contains the classical tilting modules, see (\ref{definicion support}). Furthermore, the almost complete support $\tau$-tilting modules have the desired property concerning complements, that is, they have exactly two complements. A motivation to define support $\tau$-tilting modules came from Cluster Tilting Theory, since the mutation there is always possible to do. Moreover, in  \cite[Theorem 4.1]{AIR} the authors showed that there is a deep connection between $\tau$-tilting theory and cluster-tilting theory. They also showed that the notion of support $\tau$-tilting modules is connected with silting theory, see \cite[Theorem 3.2]{AIR}.

Since $\tau$-tilting theory is a generalization of tilting theory, many properties of tilting modules are preserved by support $\tau$-tilting modules. In \cite{AHT}, for one point extension algebras I. Assem, D. Happel and S. Trepode studied how to extend and restrict tilting modules. More precisely, if $A=B[P_0]$ is the one point extension of $B$ by the projective $B$-module $P_0$, they showed how to construct in a natural way a tilting $A$-module from a tilting $B$-module and conversely, given a tilting $B$-module they constructed a tilting $A$-module. Motivated by this fact, in this article we shall study the behaviour of support $\tau$-tilting modules for one point extension. Consider $\R=_BU_A\otimes _{A} \--$ from $ \mo \,A$ to $ \mo \,B$ and $\E=\Hom_B(_BU_{A}, \--)$ from $ \mo \,B $ to $\mo \,A $ the restriction and the extension functors, respectively. We prove the following result:

\vspace{0.1in}
\noindent \textbf{Theorem A.}
\textit{Let $B$ be a finite dimensional $k$-algebra over an algebraically closed field. Let $A=B[P_0]$ the one point extension of $B$ by the projective $B$-module $P_0$ and $S$ the simple $A$-module corresponding to the new vertex of $A$. Then,
\begin{enumerate}[(a)]
\item If  $M$ is a basic support $\tau$-tilting $B$-module then   $\E M \oplus S$ is a support $\tau$-tilting $A$-module.
\item If $T$ is a basic support $\tau$-tilting $A$-module then  $\R T $ is a  support $\tau$-tilting $B$-module.
\end{enumerate}}
\vspace{0.1in}

As a direct consequence, we obtain that the functors $\R$ and $\E$ induce morphisms $r$ from $\sti\,A$ to $ \sti\,B$ and $e$ from $\sti\,B$ to $\sti\,A$ such that $re=\mbox{id}_{\sti\,B}$, where $\sti\,B$ ($\sti\,A$, respectively) is the set of isomorphism classes of basic support $\tau$-tilting modules over $B$ ($A$, respectively).

In \cite[Proposition 6.1]{AHT} the authors proved that if $B$ is a hereditary algebra, $A=B[P_0]$ and $T$ a tilting $B$-module then $\mbox{End}_A e T$ is a one-point extension of $\mbox{End}_B T$. In this work,  we generalize the same result for any algebra $B$, $A=B[P_0]$ and $T$ a $\tau$-tilting $B$-module. On the other hand, in \cite[Theorem 5.2]{AHT}, the authors also showed that there exists a full embedding of quivers between the poset of tilting modules. We prove that the above mentioned result still holds true for support $\tau$-tilting modules, as we state in the next theorem.

\vspace{0.1in}
\noindent\textbf{Theorem B.}
\textit{Let $B$ a finite dimensional $k$-algebra over an algebraically closed field and $A=B[P_0]$ be the one point extension of $B$ by the projective $B$-module $P_0$. Then the map $e:\sti\,B\rightarrow \sti\,A$ induces a full embedding of quivers $e:Q(\sti\,B)\rightarrow Q(\sti\,A)$.}
\vspace{0.1in}

Finally, we point out some technical properties concerning the successors and the predecessors of a support $\tau$-tilting module which belong to the image of $e$.

We observe that most of the statements  fail if we drop the assumption that the module $P_0$ is projective.

This paper is organized as follows. In the first section, we present some notations and preliminaries results. Section 2 is dedicated to prove Theorem A and the results concerning  the relationship between the support $\tau$-tilting $B$-modules and the support $\tau$-tilting $A$-modules. We study their torsion pairs and their endomorphism algebras. In Section 3, we prove Theorem B and state some technical consequences.

\section{Preliminaries}

Throughout this paper, all algebras are basic connected finite dimensional algebras over an algebraically closed field $k$.
\subsection{}
For an algebra $A$ we denote by $\mo\,A$ the category of finitely generated left $A$-modules. An algebra $B$ is called a \emph{full subcategory} of $A$ if there exists an idempotent $e \in A$ such that $B=eAe$. An algebra $B$ is called \emph{convex} in $A$ if, whenever there exists a sequence $e_i=e_{i_0}, e_{i_1}, \cdots e_{i_t}=e_j$ of primitive orthogonal idempotents such that $e_{i_{l+1}}Ae_{e_{i_l}}\neq 0$ for $0\leq l < t$, $ee_i=e_i$ and $ee_j=e_j$, then $ee_{i_l}=e_{i_l}$, for each $l$.

For a subcategory $C$ of $\mo\,A$, we define
\[\mathcal{C}^{\bot}=\{X\in \mo\,A \, | \,\Hom_A(\mathcal{C},X)=0\}\]
and,
\[\mathcal{C}^{{\bot}_1}=\{X\in \mo\,A \,|\, \Ex_A(\mathcal{C},X)=0\}.\]

Dually, the categories ${}^{\bot}\mathcal{C}$ and ${}^{{\bot}_1}\mathcal{C}$ are defined.

Recall that a subcategory $\mathcal{X}$ of an additive category $\mathcal{C}$ is said to be \emph{contravariantly finite} in $\mathcal{C}$ if for every object $M$ in $\mathcal{C}$ there exist some $X\in\mathcal{X}$ and a morphism $f:X\rightarrow M$ such that for every $X'\in \mathcal{X}$ the sequence $\Hom_{\mathcal{C}}(X',X)\stackrel{f}\rightarrow \Hom_{\mathcal{C}}(X',M) \rightarrow 0$ is exact. Dually we define \emph{covariantly finite subcategories} in $\mathcal{C}$. Furthermore, a subcategory of $\mathcal{C}$ is said to be \emph{functorially finite} in $\mathcal{C}$ if it is both contravariantly an covariantly finite in $\mathcal{C}$.

We denote by $D$ the usual standard duality $\Hom_k(\--,k):\mo\,A\rightarrow \mo\,A^{op}$, see \cite[I, 2.9]{ASS}.

For an $A$-module $X$, we denote by $\ad\,X $ the full subcategory of $\mo\,A$ whose objects are the direct sums of direct summands of $X$, and by $\F X$  the full subcategory of $\mo\,A$ whose objects are the factor modules of finite direct sums of copies of $X$.

Finally, we say that an $A$-module $X$ is \textit{basic} if the indecomposable direct summands of $X$ are pairwise non-isomorphic.

\subsection{}
Let $B$ be an algebra and $P_0$ be a fixed projective $B$-module. We denote by $A=B[P_0]$ the one point extension of $B$ by $P_0$, which is, the matrix algebra
\[
A=\left(
\begin{array}{ll}
B & P_0 \\
0 & k
\end{array}
\right)
\]
with the ordinary matrix addition and the multiplication induced by the module structure of $P_0$.

It is well known that $B$ is a full convex subcategory of $A$, and that there is a unique indecomposable projective $A$-module $\widetilde {P}$ which is not a projective $B$-module. Moreover, the simple top  $S$ of $\widetilde{P}$ is an injective $A$-module and $\mbox{pd}_AS \leq 1$, where by $\mbox{pd}_A S $ we mean the projective dimension of the simple $S$.

The right perpendicular category of $S$ is the full subcategory of $\mo\,A$ defined as follows
\[S^{\tiny{\mbox{perp}}}=\{X\in \mo\,A \,|\, \Hom_A(S,X)=0 \hspace{0.1in} \mbox{and} \hspace{0.1in} \Ex_A(S,X)=0\}.\]

Consider the $B$-$A$-bimodule $U=e_B \,A$, where  $e_B$ is the identity of $B$. The module $U$ is  projective as a right $A$-module and as a left $B$-module.

We consider the restriction functor and the extension functor define as follows, respectively:
\[\R=_BU_A\otimes _{A} \--: \mo \,A \rightarrow \mo \,B\]
\[\E=\Hom_B(_BU_{A}, \--): \mo \,B \rightarrow \mo \,A  \]

It is well known that $(\R,\E)$ is an adjoint pair. Moreover, since $U$ is projective, the functors $\R$ and $\E$ are exact. The unit $\delta: \mbox{id}_{\tiny{\mo\,A}}\rightarrow \E\R$  and the co-unit $\epsilon:\R\E\rightarrow \mbox{id}_{\tiny{\mo\,B}}$  are defined as follows: for $X$  an $A$-module, the functor
\begin{eqnarray*}
\delta_X: & X&\rightarrow  \Hom_B(U,U\otimes_A X)
\end{eqnarray*}
is defined by sending $x$ to $ u\rightarrow u\otimes x$, for $x \in X$ and $u \in U$ . If $M$ is a $B$-module, then
\begin{eqnarray*}
\epsilon_M:&U\otimes_A \Hom_A(U,M)&\rightarrow M
\end{eqnarray*}
 is such that sends $u\otimes f $ to$ f(u)$, for $u \in U$ and $f \in \Hom_B(U,M)$.

If we consider $\mo\,B$  embedded in $\mo\,A$ under the usual embedding functor, then $\R X$  is a submodule of $X$.

The functor $\E$ is full and faithfull. In particular, it preserves indecomposability. The functor $\R$ is the torsion radical of the torsion pair $(\mo\,B,\ad \,S)$ in $\mo\,A$, and the canonical sequence of an $A$-module $X$ in this torsion pair is the exact sequence
\[0 \rightarrow \R X \rightarrow X \rightarrow S^{r_{_X}}\rightarrow 0 \]
with $r_{_X}=\mbox{dim}_k\Hom_A(X,S)$.

The next proposition lists relevant properties of these functors, stated in \cite[Proposition 2.5, Lemma 3.1]{AHT}

\begin{prop}\label{funcprop}
The adjoint pair of functors $(\R,\E)$satisfies the following properties:
\begin{enumerate}
\item[(a)] The co-unit $\epsilon$ is a functorial isomorphism.
\item[(b)] Let $X$ be an $A$-module. $\Hom_A(S,X)=0$ if and only if $S$ is not a direct summand of $X$.
\item[(c)] If $X\in S^{\tiny{\mbox{perp}}}$, then $\delta_X:X\rightarrow \E\R X$ is a functorial isomorphism.
\item[(d)] Let $M$ be a $B$-module, then $ \E M \in S^{\tiny{\mbox{perp}}}$.
\end{enumerate}
\end{prop}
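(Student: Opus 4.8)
The plan is to base all four parts on a single observation: the restriction functor is, up to natural isomorphism, multiplication by the idempotent $e_B$, i.e. $\R X\cong e_BX$ via $e_BA\otimes_AX\cong e_BX$, and it is exact and kills exactly $\ad\,S$ (a module $Y$ with $e_BY=0$ is supported only at the new vertex, hence lies in $\ad\,S$). Combined with the three homological peculiarities of $S$ recorded above — that $S$ is injective, that $\mbox{pd}_AS\leq 1$, and that $\R S=e_BS=0$ — this controls everything. For part (a) I would simply unwind the co-unit: $\R\E M=e_BA\otimes_A\Hom_B(e_BA,M)\cong e_B\Hom_B(e_BA,M)$, and acting by $e_B$ on a $B$-homomorphism restricts its source to $e_BAe_B=B$, so that $e_B\Hom_B(e_BA,M)\cong\Hom_B(B,M)\cong M$; one checks this chain of identifications is precisely $\epsilon_M$, so $\epsilon$ is a functorial isomorphism. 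Part (b) is then immediate from injectivity: a nonzero map out of the simple module $S$ is a monomorphism, and since $S$ is injective it splits, exhibiting $S$ as a summand of $X$, while the converse is trivial.

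I would prove (d) before (c), since (c) relies on it. For the $\Hom$-vanishing, adjunction gives $\Hom_A(S,\E M)\cong\Hom_B(\R S,M)$, which is $0$ because $\R S=0$. For the $\Ex$-vanishing I would use that $\R$ is exact and carries projective $A$-modules to projective $B$-modules: the projectives supported on $B$ restrict to the corresponding projective $B$-modules, while the exceptional projective $\widetilde P=Ae_0$ restricts to $e_B\widetilde P\cong P_0$, which is projective over $B$ exactly by hypothesis. Hence applying $\R$ to a projective resolution of $S$ yields a projective resolution of $\R S=0$, and comparing with the $\Hom$-adjunction degreewise gives $\Ex_A(S,\E M)\cong\Ex_B(\R S,M)=0$.

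Finally, for (c) the plan is to exploit the triangle identity $\epsilon_{\R X}\circ\R\delta_X=\mbox{id}_{\R X}$. Since $\epsilon$ is an isomorphism by (a), this forces $\R\delta_X$ to be an isomorphism; as $\R$ is exact, both $\ker\delta_X$ and $\mbox{coker}\,\delta_X$ are annihilated by $\R$ and therefore lie in $\ad\,S$. Now the hypotheses $X\in S^{\mbox{perp}}$ and (from (d)) $\E\R X\in S^{\mbox{perp}}$ are just strong enough to kill both: applying $\Hom_A(S,-)$ to $0\to\ker\delta_X\to X$ and using $\Hom_A(S,X)=0$ forces $\ker\delta_X=0$, and applying $\Hom_A(S,-)$ to $0\to X\to\E\R X\to\mbox{coker}\,\delta_X\to 0$ together with $\Hom_A(S,\E\R X)=0=\Ex_A(S,X)$ forces $\mbox{coker}\,\delta_X=0$. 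Thus $\delta_X$ is a functorial isomorphism.

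The main obstacle is the homological input behind (d) and (c). Everything hinges on $\R$ being exact and preserving projectivity, and it is precisely the preservation of projectives that uses the hypothesis that $P_0$ is a projective $B$-module, through the identification $e_B\widetilde P\cong P_0$; this is the step that would fail for a general one-point extension, which is consistent with the remark that the statements break without projectivity. The other point requiring care is the bootstrapping in (c): one must check that the $S^{\mbox{perp}}$ conditions imposed on $X$ and on $\E\R X$ are exactly what is needed to annihilate an $\ad\,S$ kernel and cokernel, which is why both membership statements, and not merely one, are invoked.
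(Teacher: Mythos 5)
The paper offers no proof of this proposition: it is quoted from \cite{AHT} (Proposition 2.5 and Lemma 3.1 there), so there is no internal argument to compare yours against. Your proof is correct and self-contained, and it isolates the right mechanism: $\R$ is multiplication by $e_B$, hence exact, it annihilates precisely $\ad\,S$ (this uses $e_0Ae_B=0$, i.e.\ the triangular shape of the one-point extension, which gives $A/Ae_BA\cong k$), and it carries projective $A$-modules to projective $B$-modules, the identification $\R\widetilde{P}\cong P_0$ being the only point where projectivity of $P_0$ enters -- consistent with the paper's remark that the results fail without that hypothesis. Given this, (a) is the standard computation that the co-unit of the idempotent adjunction is invertible, (b) follows from injectivity of $S$, (d) follows by adjunction applied degreewise to a projective resolution of $S$ (whose image under $\R$ is a split exact complex of projectives), and your bootstrap for (c) -- the triangle identity $\epsilon_{\R X}\circ\R\delta_X=\mbox{id}$ forces $\ker\delta_X$ and $\mbox{coker}\,\delta_X$ into $\ad\,S$, after which $\Hom_A(S,X)=0$, $\Ex_A(S,X)=0$ and part (d) kill both -- is clean and uses exactly the two conditions defining $S^{\tiny{\mbox{perp}}}$. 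The only steps worth writing out in full are the isomorphism $e_B\Hom_B(e_BA,M)\cong\Hom_B(e_BAe_B,M)$ in (a), which rests on the left $B$-module decomposition $e_BA=e_BAe_B\oplus e_BAe_0$, and the claim that $\R Y=0$ implies $Y\in\ad\,S$; both are true and elementary, so there is no gap.
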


As a consequence of the above proposition, we have that $\mo\,B$ and $S^{\tiny{\mbox{perp}}}$ are equivalent categories.

Next, we state the following result, which will be very useful in this paper.

\begin{prop}\cite[Corollary 3.4, Proposition 3.6]{AHT}\label{teoext}
Let $X,Y$ be $A$-modules and $M$ be a $B$-module. Then the following conditions hold.
\begin{enumerate}[(a)]
\item $\Ex_A(X,\E M) \cong \Ex_B(\R X, M)$.
\item If $X\in S^{\tiny{\mbox{perp}}}$, then $\Ex_A(\E M, X) \cong \Ex_B(M,\R X)$.
\item  There is an epimorphism $\Ex_A(X,Y)\rightarrow \Ex_A(\R X, \R Y)$.  Moreover, if $Y \in S^{\tiny{\mbox{perp}}}$ then the morphism is an isomorphism.
\end{enumerate}

\end{prop}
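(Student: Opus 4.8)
The plan is to establish the three statements in order, deriving (c) from (a); the common mechanism is that $\R$ is exact, preserves projectives, and is left adjoint to the exact functor $\E$, which lets the $\Hom$-adjunction be promoted to $\Ex$. For (a) I would first record that $\R$ maps projectives to projectives: since $\R A={}_BU_A\otimes_A A\cong {}_BU$ is projective as a left $B$-module and $\R$ is additive, $\R P$ is a projective $B$-module for every projective $A$-module $P$. Choosing a projective resolution $P_\bullet\to X$ in $\mo\,A$ and applying the exact functor $\R$ then produces a projective resolution $\R P_\bullet\to \R X$ in $\mo\,B$. Applying the adjunction isomorphism $\Hom_A(-,\E M)\cong \Hom_B(\R\,-,M)$ termwise and passing to cohomology gives $\Ex_A(X,\E M)\cong \Ex_B(\R X,M)$ (the same argument in fact yields the isomorphism in every degree).

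For (b), put $N=\R X$. As $X\in S^{\mathrm{perp}}$, Proposition~\ref{funcprop}(c) gives $X\cong \E N$, so it is enough to prove that the map $\Ex_B(M,N)\to \Ex_A(\E M,\E N)$ induced by the exact functor $\E$ is bijective. Injectivity is immediate: applying $\R$ and using $\R\E\cong\mathrm{id}$ (Proposition~\ref{funcprop}(a)) recovers the original extension, so the image cannot split unless the original does. For surjectivity, take an extension $0\to \E N\to Y\to \E M\to 0$ in $\mo\,A$; since $\E N,\E M\in S^{\mathrm{perp}}$ (Proposition~\ref{funcprop}(d)) and $S^{\mathrm{perp}}$ is closed under extensions---which I would check from the long exact sequence obtained by applying $\Hom_A(S,-)$, whose relevant outer terms vanish---we obtain $Y\in S^{\mathrm{perp}}$, hence $Y\cong \E\R Y$. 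Applying $\R$ yields a $B$-extension of $M$ by $N$ whose image under $\E$ is the given one.

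For (c), since $(\mo\,B,\ad\,S)$ is a torsion pair, $\mo\,B$ is closed under extensions in $\mo\,A$, so $\Ex_A(\R X,\R Y)=\Ex_B(\R X,\R Y)$, which by (a) with $M=\R Y$ is isomorphic to $\Ex_A(X,\E\R Y)$. I would then verify that the map induced by $\R$ equals the composite $\Ex_A(X,Y)\xrightarrow{(\delta_Y)_*}\Ex_A(X,\E\R Y)\cong \Ex_B(\R X,\R Y)$, a diagram chase resting on the triangle identity $\epsilon_{\R Y}\circ \R\delta_Y=\mathrm{id}$. The same identity, together with Proposition~\ref{funcprop}(a), shows that $\R\delta_Y$ is an isomorphism, so both the kernel $K$ and the cokernel $C$ of the unit $\delta_Y\colon Y\to \E\R Y$ lie in $\ker\R=\ad\,S$, and hence are injective because $S$ is injective. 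Splitting $\delta_Y$ into $0\to K\to Y\to I\to 0$ and $0\to I\to \E\R Y\to C\to 0$, the first sequence splits ($K$ injective) and gives $\Ex_A(X,Y)\cong \Ex_A(X,I)$, while the second gives a surjection $\Ex_A(X,I)\twoheadrightarrow \Ex_A(X,\E\R Y)$ because $\Ex_A(X,C)=0$. This produces the asserted epimorphism, and it is an isomorphism exactly when $\delta_Y$ is invertible, that is, when $Y\in S^{\mathrm{perp}}$.

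The delicate point is (c): matching the naturally induced map with the factorization through the unit, and controlling $K$ and $C$. Both hinge on the two features peculiar to the one-point extension, namely $\ker\R=\ad\,S$ and the injectivity of $S$, which force the Ext groups against the spurious summands in $\ad\,S$ to vanish and thereby collapse the comparison to the isomorphism supplied by (a).
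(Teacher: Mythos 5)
The paper does not prove this proposition---it is imported verbatim from \cite{AHT} (Corollary 3.4 and Proposition 3.6) and used as a black box---so there is no internal proof to compare against; I can only assess your argument on its own terms, and it is correct. Part (a) is the standard ``exact left adjoint preserving projectives'' argument and works as stated ($\R A\cong{}_BU$ is projective over $B$, so $\R P_\bullet\to\R X$ is a projective resolution and the adjunction passes to cohomology). Part (b) is fine: closure of $S^{\mathrm{perp}}$ under extensions follows from the long exact sequence for $\Hom_A(S,-)$ exactly as you indicate, and injectivity/surjectivity of the map induced by $\E$ then follow from $\R\E\cong\mathrm{id}$ and Proposition~\ref{funcprop}(c). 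Part (c) is the only delicate one and your treatment holds up: $\R\delta_Y$ is invertible by the triangle identity together with Proposition~\ref{funcprop}(a), so exactness of $\R$ forces $\ker\delta_Y$ and $\mathrm{coker}\,\delta_Y$ into $\ad S$, and injectivity of $S$ plus $\mathrm{pd}_AS\le 1$ kills the relevant $\Ex$ and $\mbox{Ext}^2$ terms, yielding the surjection $\Ex_A(X,Y)\twoheadrightarrow\Ex_A(X,\E\R Y)\cong\Ex_B(\R X,\R Y)=\Ex_A(\R X,\R Y)$ (the last equality because $\mo\,B$ is a torsion class, hence extension-closed in $\mo\,A$). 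Two minor caveats: the final ``exactly when $\delta_Y$ is invertible'' overstates the converse (e.g.\ $Y=S$ gives an isomorphism $0\to 0$ although $\delta_Y$ is not invertible), but only the ``if'' direction is claimed in the statement; and identifying your composite with ``the map induced by $\R$'' is not actually needed for any application in this paper, which only ever uses the existence of some epimorphism.
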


\subsection{}

We recall some  results on $\tau$-tilting modules. For a detail account on $\tau$-tilting theory we refer the reader to \cite{AIR}.

\begin{defi}\cite[Definition 0.1]{AIR}\label{definicion support}
Let $A$ be a finite dimensional algebra.
\begin{enumerate}[(a)]
\item An $A$-module $M$ is $\tau$-rigid if $\Hom_A(M,\tau M)=0$.
\item An $A$-module $M$ is $\tau$-tilting (almost complete $\tau$-tilting, respectively) if $M$ is $\tau$-rigid and $|M|=|A|$ ($|M|=|A|-1$, respectively).
\item An $A$-module $M$ is support $\tau$-tilting if there exists an idempotent $e$ of $A$ such that $M$ is a $\tau$-tilting $A/ \langle e \rangle $-module.
\end{enumerate}
\end{defi}

For the convenience of the reader we state \cite[Proposition 5.8]{AS} which will be useful for our further purposes.

\begin{prop}\label{propfac}
Let $X,Y \in \mo\,A$. The following conditions hold.
\begin{enumerate}[(a)]
\item $\Hom_A(X,\tau Y)=0$ if and only if $\Ex_A(M, \F\, N)=0$.
\item $M$ is $\tau$-rigid if and only if $M$ is $\mbox{Ext}$-projective in $\F \,M$.
\end{enumerate}
\end{prop}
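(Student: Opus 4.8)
Since part (b) is the diagonal case $X=Y=M$ of part (a) --- $M$ is $\tau$-rigid precisely when $\Hom_A(M,\tau M)=0$, and $M$ is $\mbox{Ext}$-projective in $\F\,M$ precisely when $\Ex_A(M,\F\,M)=0$ --- I would focus on (a), in the form
\[\Hom_A(X,\tau Y)=0 \iff \Ex_A(Y,\F\,X)=0.\]
My main tool would be Auslander--Reiten duality in functorial form. Fixing a minimal projective presentation $Q_1\xrightarrow{q}Q_0\to Y\to 0$ and writing $\tau Y=D\,\mbox{Tr}\,Y$, the hom--tensor adjunction gives a natural isomorphism
\[\Hom_A(X,\tau Y)\cong D\big(\mbox{coker}(\Hom_A(Q_0,X)\xrightarrow{q^{*}}\Hom_A(Q_1,X))\big),\]
while the classical form reads $\Ex_A(Y,Z)\cong D\,\overline{\Hom}_A(Z,\tau Y)$, with $\overline{\Hom}$ the morphisms modulo those factoring through injectives. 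By additivity I may assume $Y$ indecomposable, and the projective case is trivial since then $\tau Y=0$ and $\Ex_A(Y,-)=0$; so I assume $Y$ indecomposable non-projective.

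For the implication ($\Rightarrow$) I would argue directly, avoiding the overline. Assume $\Hom_A(X,\tau Y)=0$ and take $Z\in\F\,X$, so there is an epimorphism $X^{n}\twoheadrightarrow Z$. Precomposing any morphism $Z\to\tau Y$ with it produces a morphism $X^{n}\to\tau Y$, which is zero by hypothesis; right-cancelling the epimorphism shows the original morphism vanishes, so $\Hom_A(Z,\tau Y)=0$ and a fortiori $\overline{\Hom}_A(Z,\tau Y)=0$. Hence $\Ex_A(Y,Z)=0$ for every $Z\in\F\,X$.

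The converse ($\Leftarrow$) is the real obstacle, and it is where the hypothesis on the whole of $\F\,X$ (rather than on $X$ alone) is needed: the formula $\Ex_A(Y,X)\cong D\,\overline{\Hom}_A(X,\tau Y)$ only controls maps $X\to\tau Y$ modulo injectives, and such maps through injectives need not vanish. Assuming $\Ex_A(Y,\F\,X)=0$, I would instead prove that $q^{*}$ is surjective, which by the first displayed isomorphism is equivalent to $\Hom_A(X,\tau Y)=0$. Given $g\colon Q_1\to X$, I form the pushout of $g$ along the canonical epimorphism $e\colon Q_1\twoheadrightarrow K=\mbox{Im}\,q$; the pushout module $W$ lies in $\F\,X$ since the pushout of the epimorphism $e$ is an epimorphism $\beta\colon X\twoheadrightarrow W$, with $\ker\beta=g(\ker q)$. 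Now $\Ex_A(Y,W)=0$ lets me extend the structural map $K\to W$ along the inclusion $K\hookrightarrow Q_0$, and projectivity of $Q_0$ lets me lift that extension along $\beta$ to some $h\colon Q_0\to X$. A short diagram chase gives that $g-hq$ has image inside $\ker\beta=g(\ker q)$; since the presentation is minimal, $\ker q\subseteq\mbox{rad}\,Q_1$, so this image lies in $\mbox{rad}(\mbox{Im}\,g)$. Thus, modulo $\mbox{Im}\,q^{*}$, the class of $g$ is replaced by that of a map whose image sits one step deeper in the radical filtration of $X$. Iterating and invoking nilpotency of the radical, the class of $g$ in $\mbox{coker}\,q^{*}$ must vanish; hence $q^{*}$ is surjective and $\Hom_A(X,\tau Y)=0$.

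The forward direction and the reduction of (b) to (a) are formal; I expect the delicate point to be this last pushout-and-radical-filtration argument for ($\Leftarrow$), where minimality of the projective presentation (ensuring $\ker q\subseteq\mbox{rad}\,Q_1$) is exactly what makes the iteration terminate.
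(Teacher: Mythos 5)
The paper gives no proof of this proposition --- it is quoted (with a typographical slip, $M,N$ appearing where $X,Y$ are meant) from \cite[Proposition 5.8]{AS} --- so there is no in-paper argument to measure yours against; your reading of part (a) as $\Hom_A(X,\tau Y)=0\Leftrightarrow\Ex_A(Y,\F\,X)=0$ is the correct one and matches how the result is actually invoked later (e.g.\ in the proof of Proposition \ref{prop_ortog}). Your proof is correct and is essentially the classical Auslander--Smal\o{} argument (cf.\ also Proposition 2.4 of \cite{AIR}): the identification $\Hom_A(X,\tau Y)\cong D\,\mbox{coker}\,(q^{*})$ coming from the exact sequence $0\to\tau Y\to\nu Q_1\to\nu Q_0$, the forward implication via the Auslander--Reiten formula, and, for the converse, the pushout of $g\colon Q_1\to X$ along $Q_1\twoheadrightarrow \mbox{Im}\,q$ followed by the radical-filtration induction, which is precisely where minimality of the presentation ($\ker q\subseteq\mbox{rad}\,Q_1$) is needed; the reduction of (b) to the diagonal case $X=Y=M$ of (a) is also as intended.
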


The next result gives a relationship between the torsion classes and the support $\tau$ -tilting modules. We denote by $\sti\,A$ the set of isomorphism classes of basic support $\tau$-tilting $A$-modules and by $f-\mbox{tors}\,A$ the set of functorially finite torsion classes in $\mo\,A$.

\begin{teo}\cite[Theorem 2.7]{AIR}
There is a bijection between $f-\mbox{tors}\,A$ and $\sti\,A$ given by $\mathcal{T} \rightarrow P(\mathcal{T})$ with inverse $M\rightarrow \F \,M$.
\end{teo}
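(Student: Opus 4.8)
The plan is to realize the two assignments $\mathcal{T}\mapsto P(\mathcal{T})$ and $M\mapsto \F M$ as mutually inverse maps, where $P(\mathcal{T})$ denotes the basic module obtained by taking one copy of each indecomposable $\Ext$-projective object of $\mathcal{T}$, that is, each indecomposable $X\in\mathcal{T}$ with $\Ex_A(X,\mathcal{T})=0$. I would carry this out in three stages: first, that $\F M$ is a functorially finite torsion class for every $M\in\sti\,A$; second, that $P(\mathcal{T})$ is a basic support $\tau$-tilting module for every $\mathcal{T}\in f-\mbox{tors}\,A$; and third, that the two assignments compose to the identity in both directions.

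For the first stage, closure of $\F M$ under factor modules is immediate from the definition, while closure under extensions follows once we know $\Ex_A(M,\F M)=0$: given a short exact sequence whose outer terms lie in $\F M$, the vanishing of $\Ex_A(M,-)$ on $\F M$ lets one lift an epimorphism from a finite sum of copies of $M$ onto the cokernel, and a standard diagram chase then exhibits the middle term as a factor of a sum of copies of $M$. The equality $\Ex_A(M,\F M)=0$ is exactly the $\tau$-rigidity of $M$ by Proposition \ref{propfac}(a) taken with $X=Y=M$, so $\F M$ is a torsion class. Contravariant finiteness is automatic, since the trace of $M$ in any module $X$ is the image of an epimorphism from a finite sum of copies of $M$ and supplies a right $\F M$-approximation; the covariant finiteness is the delicate point, and this is where the full strength of $M$ being support $\tau$-tilting, rather than merely $\tau$-rigid, does real work.

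For the second stage, every indecomposable summand $X$ of $P(\mathcal{T})$ satisfies $\F X\subseteq\mathcal{T}$ and hence $\Ex_A(X,\F X)=0$; collecting summands gives $\Ex_A(P(\mathcal{T}),\F P(\mathcal{T}))=0$, so Proposition \ref{propfac}(b) shows that $P(\mathcal{T})$ is $\tau$-rigid. It then remains to check that $P(\mathcal{T})$ carries exactly the number of indecomposable summands required to be $\tau$-tilting over the quotient $A/\langle e\rangle$, where $e$ is supported on the simples not occurring in $\F P(\mathcal{T})$. For the third stage I would establish the two identities $\F P(\mathcal{T})=\mathcal{T}$ and $P(\F M)=M$: the inclusion $\F P(\mathcal{T})\subseteq\mathcal{T}$ and the relation $M\in\ad P(\F M)$ are formal consequences of the definitions and of $\tau$-rigidity, while the reverse containments encode that a functorially finite torsion class is generated by its $\Ext$-projectives and that a support $\tau$-tilting module already exhausts the $\Ext$-projectives of its own torsion class.

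The main obstacle I anticipate is precisely the identity $\mathcal{T}=\F P(\mathcal{T})$ together with the summand count of the second stage. Showing that every object of a functorially finite torsion class is a factor of an $\Ext$-projective object requires building relative projective covers inside $\mathcal{T}$ from the approximation morphisms guaranteed by functorial finiteness, and then pinning down $|P(\mathcal{T})|$ by a Bongartz-type completion argument showing that the $\tau$-rigid module so obtained admits no proper enlargement inside $\mathcal{T}$. Once these are in place, the bijectivity and the mutual inversion follow formally from the two displayed identities.
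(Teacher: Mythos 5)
There is no in-paper proof to compare against: the paper imports this statement verbatim from \cite{AIR} as background and never argues it. Judged on its own, your outline does reproduce the strategy of the original proof --- realize the two assignments via $\Ext$-projectives and $\F$, check well-definedness in both directions, and verify the two composite identities $\F P(\mathcal{T})=\mathcal{T}$ and $P(\F M)=M$. The torsion-class and contravariant-finiteness arguments in your first stage, and the $\tau$-rigidity of $P(\mathcal{T})$ via Proposition \ref{propfac}, are correct as written.

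Two problems remain. First, you locate the ``real work'' of the support $\tau$-tilting hypothesis in the covariant finiteness of $\F M$; that is a misattribution that would send you down the wrong road. By Smal{\o}'s theorem (in the paper's bibliography), a torsion class is functorially finite if and only if it has the form $\F M$ for a single module $M$, so once $\tau$-rigidity gives closure of $\F M$ under extensions, functorial finiteness is automatic --- $\tau$-rigidity alone suffices for your first stage. Where the support hypothesis is genuinely indispensable is in forcing $P(\F M)\in\ad M$: for a $\tau$-rigid module that is not support $\tau$-tilting (e.g.\ $P_1$ over the path algebra of $1\to 2$, whose torsion class $\F P_1$ has $P_1\oplus S_1$ as its $\Ext$-projectives) the $\Ext$-projectives of $\F M$ strictly exceed $\ad M$, and it is maximality in the sense of Theorem \ref{teotauequiv}(b), applied over the support algebra $A/\langle e\rangle$, that closes this gap. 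Second, the two steps you defer --- the count $|P(\mathcal{T})|=|A/\langle e\rangle|$ and the inclusion $\mathcal{T}\subseteq\F P(\mathcal{T})$ --- are not side conditions but the entire mathematical content of the theorem; ``build relative projective covers from the approximation morphisms'' and ``a Bongartz-type completion argument'' name the right tools, but in \cite{AIR} each of these is a substantive argument (the minimal left $\mathcal{T}$-approximation $A\to T_0$ must be shown to be $\Ext$-projective with $\F T_0=\mathcal{T}$ and $\ad T_0=\ad P(\mathcal{T})$). As it stands the proposal is an accurate table of contents for the proof rather than a proof.
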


\begin{obs}
Note that the inclusion in $f-\mbox{tors}\,A$ gives rise to a partial order on $\sti\,A$, as follows: \textquotedblleft $ U \leq T$ if and only if $\F\,U \subset \F\,T$". Then, $\sti\,A$ is a partially ordered set.
\end{obs}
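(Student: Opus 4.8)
The plan is to deduce the claim from the bijection recalled in the preceding theorem, by transporting the inclusion order along it. First I would note that inclusion $\subseteq$ is a partial order on any family of subcategories of $\mo\,A$, hence in particular on $f-\mbox{tors}\,A$; so it suffices to check that the relation \textquotedblleft $U\le T$ iff $\F\,U\subseteq\F\,T$" on $\sti\,A$ is the pullback of this order along the map $M\mapsto\F\,M$, and that such a pullback is again a partial order.

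Reflexivity and transitivity are then immediate and require no $\tau$-tilting input: from $\F\,U\subseteq\F\,U$ we get $U\le U$, and from $\F\,U\subseteq\F\,T$ together with $\F\,T\subseteq\F\,V$ we get $\F\,U\subseteq\F\,V$, i.e. $U\le V$. The only step that uses representation-theoretic content is antisymmetry. For this I would invoke \cite[Theorem 2.7]{AIR}: since $\mathcal{T}\mapsto P(\mathcal{T})$ and $M\mapsto\F\,M$ are mutually inverse bijections between $f-\mbox{tors}\,A$ and $\sti\,A$, the assignment $M\mapsto\F\,M$ is injective. Hence $\F\,U\subseteq\F\,T$ and $\F\,T\subseteq\F\,U$ force $\F\,U=\F\,T$, and injectivity yields $U\cong T$; as the elements of $\sti\,A$ are isomorphism classes of basic modules, this means $U=T$ in $\sti\,A$.

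Putting the three verifications together shows that $\le$ is reflexive, antisymmetric and transitive, so $(\sti\,A,\le)$ is a poset. The only genuine obstacle is antisymmetry, and it rests entirely on the injectivity of $M\mapsto\F\,M$ guaranteed by the bijection of \cite[Theorem 2.7]{AIR}; the reflexivity and transitivity axioms are inherited for free from the inclusion order on $f-\mbox{tors}\,A$.
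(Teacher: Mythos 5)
Your proposal is correct and follows exactly the route the paper intends: the remark is stated without proof precisely because the order is transported along the bijection of the preceding theorem, with antisymmetry resting on the injectivity of $M\mapsto\F\,M$. Nothing is missing.
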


For $\tau$-tilting modules, we have a result which is an analog to Bongartz Lemma for tilting modules. For the convenience of the reader we state Bongartz's Lemma below.

\begin{teo}\cite[Theorem 2.10]{AIR}
Let $U$ be a $\tau$-rigid $A$-module. Then, $\mathcal{T}= {}^{\bot}(\tau U)$ is a sincere functorially finite torsion class and $T=P(\mathcal{T})$ is a $\tau$-tilting $A$-module satisfying $U\in\ad\,T$ and $ {}^{\bot}(\tau U)=\F \,T$.
\end{teo}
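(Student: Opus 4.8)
The plan is to identify $T$ with the basic support $\tau$-tilting module corresponding, under the bijection of \cite[Theorem 2.7]{AIR}, to the candidate torsion class $\mathcal T:={}^{\bot}(\tau U)$; so I must show that $\mathcal T$ is a sincere, functorially finite torsion class and then read the statement off from $\mathcal T\mapsto P(\mathcal T)$. That $\mathcal T$ is a torsion class is immediate: the contravariant functor $\Hom_A(-,\tau U)$ is left exact, so a short exact sequence $0\to X'\to X\to X''\to 0$ induces an exact sequence $0\to\Hom_A(X'',\tau U)\to\Hom_A(X,\tau U)\to\Hom_A(X',\tau U)$; hence $X\in\mathcal T$ forces $X''\in\mathcal T$ (closure under quotients) while $X',X''\in\mathcal T$ forces $X\in\mathcal T$ (closure under extensions). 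I record two facts for later use: $U\in\mathcal T$, because $\Hom_A(U,\tau U)=0$; and $U$ is $\Ex$-projective in $\mathcal T$, because for $X\in\mathcal T$ the Auslander--Smal\o{} formula (Proposition~\ref{propfac}(a)) turns $\Hom_A(X,\tau U)=0$ into $\Ex_A(U,\F\,X)=0$, whence $\Ex_A(U,X)=0$.

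The crux, exactly as in the classical Bongartz lemma, is the functorial finiteness of $\mathcal T$. I would obtain it by constructing a Bongartz complement $\bar U$ with $\F\,(U\oplus\bar U)=\mathcal T$: since $\F\,M$ is functorially finite for any single module $M$, such an identification delivers functorial finiteness at once. The complement would be built by a universal-extension argument starting from the $\Ex$-projectivity of $U$ in $\mathcal T$ and from the torsion radical of the pair $(\mathcal T,\mathcal T^{\bot})$, after which one checks that $U\oplus\bar U$ both generates all of $\mathcal T$ and remains $\tau$-rigid. The two delicate points are that the requisite approximations exist and may be chosen inside $\mathcal T$, and that the completion acquires exactly $|A|$ pairwise non-isomorphic indecomposable summands; both hinge on the homological behaviour of $\tau U$ rather than of $\Ex_A(U,-)$ alone, which is precisely why the tilting-theoretic proof cannot be transcribed directly.

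Granting functorial finiteness, \cite[Theorem 2.7]{AIR} associates to $\mathcal T$ the basic support $\tau$-tilting module $T:=P(\mathcal T)$, and the inverse bijection gives $\F\,T=\mathcal T={}^{\bot}(\tau U)$, which is the last assertion. Since $P(\mathcal T)$ is the additive generator of the $\Ex$-projective objects of $\mathcal T$, and I have shown that $U\in\mathcal T$ is $\Ex$-projective there, every indecomposable summand of $U$ is an indecomposable $\Ex$-projective of $\mathcal T$, so $U\in\ad\,T$.

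It remains to see that $\mathcal T$ is sincere, which is what upgrades $T$ from support $\tau$-tilting to a genuine $\tau$-tilting module. Here I would use that $\dim_k\Hom_A(X,I_i)$ equals the multiplicity of the simple $S_i$ as a composition factor of $X$, so that an indecomposable injective $I_i$ lies in the torsion-free class $\mathcal T^{\bot}$ exactly when $S_i$ is a composition factor of no module of $\mathcal T$; thus $\mathcal T$ is sincere if and only if $\mathcal T^{\bot}$ contains no indecomposable injective. For each $i$ with $[\tau U:S_i]=0$ one has $\Hom_A(P_i,\tau U)=0$, so $P_i\in\mathcal T$ already exhibits $S_i$; the finitely many remaining simples are supplied by the Bongartz complement, so that $|T|=|A|$ and $T$ is $\tau$-tilting. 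I expect the functorial finiteness of $\mathcal T$ to be the principal obstacle, with the sincerity a routine consequence once $\bar U$ has been constructed.
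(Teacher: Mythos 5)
This statement is quoted in the paper as background, with an explicit citation to \cite[Theorem 2.10]{AIR} and no proof supplied, so there is no in-house argument to compare yours against; I can only judge the proposal on its own terms. The routine parts are handled correctly: ${}^{\bot}(\tau U)$ is closed under quotients and extensions by left exactness of $\Hom_A(-,\tau U)$; $U$ lies in $\mathcal{T}$ and is $\mbox{Ext}$-projective there by the Auslander--Smal\o{} criterion (Proposition \ref{propfac}); and, \emph{granted} that $\mathcal{T}$ is a sincere functorially finite torsion class, the equality $\F\,T=\mathcal{T}$ and the membership $U\in\ad\,T$ do follow from the bijection of \cite[Theorem 2.7]{AIR} together with the description of $P(\mathcal{T})$ as the direct sum of the indecomposable $\mbox{Ext}$-projectives of $\mathcal{T}$.

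The genuine gap sits exactly where the theorem has its content. Functorial finiteness of ${}^{\bot}(\tau U)$ is never established: you announce that it \emph{would} follow from a Bongartz complement $\bar U$ with $\F(U\oplus\bar U)=\mathcal{T}$, but $\bar U$ is never constructed, the ``universal-extension argument'' is never specified, and you yourself flag as open the two points on which everything depends (that the required approximations exist and can be taken inside $\mathcal{T}$, and that $U\oplus\bar U$ has exactly $|A|$ pairwise non-isomorphic indecomposable summands). The published argument produces the complement by an explicit construction from an exact sequence attached to an approximation of $A$ by $\ad\,U$, and then verifies that $U\oplus\bar U$ is $\tau$-rigid, that $\F(U\oplus\bar U)={}^{\bot}(\tau U)$, and that the summand count is $|A|$; none of these verifications appear in your sketch, and they cannot be outsourced to \cite[Theorem 2.10]{AIR} without circularity, since they \emph{are} that theorem. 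The sincerity argument inherits the same defect: the simples $S_i$ with $[\tau U:S_i]=0$ are correctly exhibited by $P_i\in\mathcal{T}$, but the remaining simples are said to be ``supplied by the Bongartz complement'', i.e.\ by the object you have not produced. As written, this is a correct and well-organized plan, not a proof.
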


$P( {}^{\bot}(\tau U))$ is said to be the \emph{Bongartz completion}  of $U$.

\vspace{0.2in}
We have the following characterizations for a $\tau$-rigid module to be a $\tau$-tilting module.

\begin{teo}\cite[Theorem 2.12]{AIR}\label{teotauequiv}
The following conditions are equivalent for a $\tau$-rigid module $T$.
\begin{enumerate}[(a)]
\item $T$ is $\tau$-tilting.
\item $T$ is maximal $\tau$-rigid, i.e., if $T\oplus X$ is $\tau$-rigid for some $A$-module $X$, then $X\in \ad\,T$.
\item ${}^{\bot}(\tau T)=\F \, T$.
\end{enumerate}
\end{teo}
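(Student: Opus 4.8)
My plan is to take the Bongartz completion as the main engine. Write $\bar T:=P({}^{\bot}(\tau T))$ for the Bongartz completion of the $\tau$-rigid module $T$; by Bongartz's Lemma (stated above) $\bar T$ is a $\tau$-tilting $A$-module with $T\in\ad\,\bar T$ and $\F\,\bar T={}^{\bot}(\tau T)$. I would first record two facts valid for every $\tau$-rigid $T$. First, $\F\,T\subseteq{}^{\bot}(\tau T)$: for $X\in\F\,T$ one has $\F\,X\subseteq\F\,T$, and $\tau$-rigidity gives $\Ex_A(T,\F\,T)=0$, so Proposition~\ref{propfac}(a) yields $\Hom_A(X,\tau T)=0$. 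Second, by Proposition~\ref{propfac}(b), $T$ is Ext-projective in $\F\,T$; the same applies to each summand of the $\tau$-tilting module $\bar T$ inside $\F\,\bar T$.

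Next I would settle $(a)\Leftrightarrow(b)$. For $(b)\Rightarrow(a)$, decompose $\bar T=T\oplus T'$; since $\bar T$ is $\tau$-rigid so is $T\oplus T'$, and maximality forces $T'\in\ad\,T$, hence $\ad\,T=\ad\,\bar T$ and the basic module $T$ equals $\bar T$, which is $\tau$-tilting. For $(a)\Rightarrow(b)$, I would use that a $\tau$-rigid module has at most $|A|$ non-isomorphic indecomposable summands \cite{AIR}: if $T$ is $\tau$-tilting and $T\oplus X$ is $\tau$-rigid, then $|A|=|T|\leq|T\oplus X|\leq|A|$, so $X\in\ad\,T$.

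Then I would prove $(a)\Leftrightarrow(c)$. If $T$ is $\tau$-tilting, then $T$ and $\bar T$ are both $\tau$-tilting, so $|T|=|A|=|\bar T|$; together with $T\in\ad\,\bar T$ this forces $\ad\,T=\ad\,\bar T$, whence $\F\,T=\F\,\bar T={}^{\bot}(\tau T)$, giving $(c)$. Conversely, assume $\F\,T={}^{\bot}(\tau T)=:\mathcal T$. Then $\mathcal T=\F\,\bar T$ is a functorially finite torsion class of which both $T$ and $\bar T$ are Ext-projective generators (by the preliminary facts). Invoking the Auslander--Smal\o\ description \cite{AS} of Ext-projectives in a functorially finite torsion class---the indecomposable Ext-projectives of $\mathcal T$ are finite in number and their sum is the unique basic Ext-projective generator of $\mathcal T$---I would conclude $T\cong\bar T$, so $T$ is $\tau$-tilting, establishing $(c)\Rightarrow(a)$.

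The step I expect to be the main obstacle is precisely this last identification: that a $\tau$-rigid $T$ with $\F\,T={}^{\bot}(\tau T)$ must coincide with its Bongartz completion. The tempting shortcut---pick an indecomposable Ext-projective summand $N$ of $\bar T$, write an epimorphism $T^{n}\twoheadrightarrow N$ from $N\in\F\,T$, and split it using $\Ex_A(N,\mathcal T)=0$---breaks down because the kernel of such an epimorphism need not lie in $\mathcal T$ (a torsion class is not closed under submodules), so the sequence need not split. Thus the real content is the uniqueness of the Ext-projective generator of a functorially finite torsion class, equivalently the injectivity of the assignment $M\mapsto\F\,M$; this, together with the bound $|M|\leq|A|$ used in $(a)\Rightarrow(b)$, is where the weight of the proof lies.
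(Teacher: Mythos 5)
This is a theorem quoted from \cite{AIR} (Theorem 2.12); the paper itself gives no proof, so your argument can only be judged on its own terms. The implications $(a)\Leftrightarrow(b)$ and $(a)\Rightarrow(c)$ are essentially correct (granting the external fact from \cite{AIR} that a $\tau$-rigid module has at most $|A|$ indecomposable summands), but there is a genuine gap in $(c)\Rightarrow(a)$, at exactly the step you flag as the crux. The uniqueness statement you invoke --- that a functorially finite torsion class $\mathcal{T}$ admits a \emph{unique} basic Ext-projective module $G$ with $\F\,G=\mathcal{T}$ --- is false. Take $A=kA_2$ with quiver $1\to 2$ and $\mathcal{T}=\ad(P_1\oplus S_1)$: both $P_1$ and $P_1\oplus S_1$ are basic, Ext-projective in $\mathcal{T}$, and satisfy $\F\,P_1=\F\,(P_1\oplus S_1)=\mathcal{T}$. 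What is true is only that $P(\mathcal{T})$, the sum of \emph{all} indecomposable Ext-projectives of $\mathcal{T}$, is the unique maximal such module; that gives $T\in\ad\,\bar T$ (which you already had from Bongartz) but not $T\cong\bar T$. Your concluding step uses only that $T$ is Ext-projective with $\F\,T=\F\,\bar T=\mathcal{T}$, and the example shows this is strictly weaker than what is needed. (The example does not contradict the theorem, since ${}^{\bot}(\tau P_1)=\mo\,A\neq\F\,P_1$; the point is precisely that your final step never uses the equality ${}^{\bot}(\tau T)=\F\,T$ beyond $\F\,T=\F\,\bar T$.)

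The standard repair, which is how \cite{AIR} argue, goes through $(c)\Rightarrow(b)$: if $T\oplus X$ is $\tau$-rigid, then $X\in{}^{\bot}(\tau T)=\F\,T$; take a surjective right $(\ad\,T)$-approximation $T'\to X$ and show that its kernel $K$ satisfies $\Hom_A(K,\tau T)=0$, so that $K\in{}^{\bot}(\tau T)=\F\,T$ \emph{by hypothesis $(c)$}; then Ext-projectivity of $X$ in $\F\,T$ splits the sequence and $X\in\ad\,T$. Thus hypothesis $(c)$ enters exactly to place the kernel inside the torsion class --- the obstruction you correctly identified --- and no uniqueness-of-generators principle substitutes for it. A smaller slip: your justification of $\F\,T\subseteq{}^{\bot}(\tau T)$ via Proposition~\ref{propfac}(a) from $\Ex_A(T,\F\,T)=0$ is off, since that criterion would require $\Ex_A(X,\F\,T)=0$, i.e.\ Ext-projectivity of $X$ itself; the correct (and easier) argument is to apply $\Hom_A(-,\tau T)$ to an epimorphism $T^{n}\twoheadrightarrow X$ and use $\Hom_A(T,\tau T)=0$.
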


It is  convenient to see the support $\tau$-tilting $A$-modules and the $\tau$-rigid $A$-modules, as certain pair of $A$-modules. More precisely,

\begin{defi}\cite[Definition 0.3]{AIR}
Let $(M,P)$ be a pair with $M\in \mo\,A$ and $P$ a projective $A$-module.
\begin{enumerate}[(a)]
\item  If $M$ is $\tau$-rigid and $\Hom_A(P,M)=0$ then $(M,P)$ is a $\tau$-rigid pair.
\item  If $(M,P)$ is $\tau$-rigid and $|M|+|P|=|A|$ ($|M|+|P|=|A|-1$, respectively) then $(M,P)$ is a support $\tau$-tilting (almost complete support $\tau$-tilting, respectively) pair.
\end{enumerate}
\end{defi}

It follows from \cite[Proposition 2.3]{AIR}, that  the notion of support $\tau$ -tilting module and the notion of support $\tau$ -tilting pair are essentially the same.

We say that $(X,0)$ ($(0,X)$, respectively) with $X$ an indecomposable module is a complement of an almost complete support $\tau$-tilting pair $(U,Q)$ if $(U\oplus X,Q)$ ($(U,Q\oplus X)$, respectively) is a support $\tau$-tilting pair.

\begin{teo}\cite[Theorem 2.18]{AIR}
Any basic almost complete support $\tau$-tilting pair for $\mo\,A$ has exactly two complements.
\end{teo}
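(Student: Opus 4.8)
The plan is to reduce the statement to one about an ordinary $\tau$-tilting module over a quotient algebra, exhibit two canonical complements via the torsion-class bijection, and then show that no third complement can exist.

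First I would reduce to the case $Q=0$. Write $\ad Q=\ad(Ae)$ for an idempotent $e$ and set $\bar A=A/\langle e\rangle$. By the correspondence between support $\tau$-tilting pairs and $\tau$-tilting modules over $\bar A$, \cite[Proposition 2.3]{AIR}, a support $\tau$-tilting pair $(M,Ae)$ is the same datum as a $\tau$-tilting $\bar A$-module $M$; hence $U$ is an almost complete $\tau$-tilting $\bar A$-module with $|U|=|\bar A|-1$. Under this translation a complement of $(U,Q)$ is exactly one of two kinds: an indecomposable $X$ with $U\oplus X$ a $\tau$-tilting $\bar A$-module (coming from $(U\oplus X,Q)$), or an indecomposable projective $\bar A$-module $P_j$ with $U$ a $\tau$-tilting $(\bar A/\langle e_j\rangle)$-module (coming from $(U,Q\oplus P_j)$). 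So it suffices to prove that a basic almost complete $\tau$-tilting module $U$ over an algebra, which I now rename $A$ with $|U|=|A|-1=n-1$, admits, counting both kinds, exactly two complements.

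Next I would produce two distinct complements from the two extreme torsion classes attached to $U$. Since $U$ is $\tau$-rigid, the Bongartz completion \cite[Theorem 2.10]{AIR} gives ${}^{\bot}(\tau U)=\F(T_1)$ with $T_1=P({}^{\bot}(\tau U))$ a $\tau$-tilting module containing $U$; as $|T_1|=n$ and $|U|=n-1$ we obtain $T_1=U\oplus X_1$, a complement of the first kind. At the other extreme, $\F U$ is a functorially finite torsion class in which $U$ is Ext-projective by Proposition \ref{propfac}(b), so $U\in\ad P(\F U)$; because $|U|=n-1$ either $P(\F U)=U\oplus X_2$ (a first-kind complement) or $P(\F U)=U$, whose pair is $(U,P_j)$ (a second-kind complement $X_2=P_j$). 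These two are distinct, since $\F U\subsetneq{}^{\bot}(\tau U)$ strictly: equality would force $U$ to be $\tau$-tilting by Theorem \ref{teotauequiv}(c), contradicting that $U$ is almost complete.

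The hard part will be to show there is no third complement. Every complement $X$ determines a functorially finite torsion class $\mathcal T$ (namely $\F(U\oplus X)$ in the first case, and $\F U$ in the second) in which $U$ is Ext-projective and is all but one indecomposable summand of $P(\mathcal T)$; the characterization of Ext-projectivity through $\tau$ then forces $\F U\subseteq\mathcal T\subseteq{}^{\bot}(\tau U)$, with the second-kind case collapsing to the bottom $\mathcal T=\F U$. It therefore remains to exclude a first-kind complement $X\neq X_1$ whose class $\F(U\oplus X)$ lies strictly between $\F U$ and ${}^{\bot}(\tau U)$. I would do this by an exchange-sequence argument: take a minimal left $\ad U$-approximation $X\to U^{0}$, form the induced short exact sequence, and use that $U$ is $\tau$-rigid together with the vanishing $\Ex_A(U,\F(U\oplus X))=0$ supplied by Proposition \ref{propfac}(a) to show its cokernel is again an indecomposable complement forced to coincide with the partner of $X_1$, whence $\mathcal T$ must be an endpoint. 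Establishing that this exchange sequence is uniquely determined, equivalently that the interval $[\F U,\,{}^{\bot}(\tau U)]$ contains no intermediate functorially finite torsion class with $U$ as a codimension-one Ext-projective, is the crux, as it is precisely where ``exactly two'' rather than merely ``at least one'' is used.
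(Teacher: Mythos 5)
This statement is imported verbatim from \cite[Theorem 2.18]{AIR} and is used in the paper as a black box: the paper contains no proof of it, so there is no internal argument to compare yours against. Judged on its own terms, the first two stages of your proposal are sound and standard: the reduction to the case $Q=0$ over $\bar A=A/\langle e\rangle$ via \cite[Proposition 2.3]{AIR} is legitimate, and the two canonical complements do come from the two ends of the interval of torsion classes --- the Bongartz completion $P({}^{\bot}(\tau U))=U\oplus X_1$ at the top and $P(\F\,U)$ at the bottom, distinct because $\F\,U={}^{\bot}(\tau U)$ would make $U$ itself $\tau$-tilting by Theorem~\ref{teotauequiv}, contradicting $|U|=|A|-1$. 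Your observation that every completion $T$ satisfies $\F\,U\subseteq\F\,T\subseteq{}^{\bot}(\tau U)$, and that injectivity of $T\mapsto\F\,T$ handles multiplicity at each endpoint, is also correct.

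The genuine gap is the ``at most two'' half, and you flag it yourself. What must be shown is that no completion realizes a torsion class strictly between $\F\,U$ and ${}^{\bot}(\tau U)$; equivalently, that $X\notin\F\,U$ forces $\F(U\oplus X)={}^{\bot}(\tau U)$. Your sketch via a minimal left $\ad\,U$-approximation $X\rightarrow U^{0}$ does not deliver this: you do not justify why that approximation should be injective (it need not be --- whether it is or not is precisely the dichotomy separating the two kinds of mutation in \cite[Theorem 2.30]{AIR}, a result that logically comes \emph{after} the two-complements theorem in that paper), why its cokernel is indecomposable or $\tau$-rigid, or why the existence of a second exchange partner for $X$ would pin $\F(U\oplus X)$ to an endpoint of the interval. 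Since ``exactly two'' rather than ``at least two'' is the entire content of the theorem --- it is what makes mutation of support $\tau$-tilting modules always possible --- the proposal as written establishes only the easy half and defers the essential one.
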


Two completions $(T,P)$ and $(T',P')$ of an almost complete support $\tau$-tilting pair $(U,Q)$ are called mutations one of each other. We write $(T',P')=\mu_{(X,0)}(T,P)$ ($(T',P')=\mu_{(0,X)}(T,P)$, respectively) if  $(X,0)$ ($(0,X)$, respectively) is a complement of $(U,Q)$ giving rise to $(T,P)$.

\begin{defi}\cite[Definition 2.28]{AIR}\label{leftmutation}
Let $T=X \oplus U$ and $T'$ be  support $\tau$-tilting $A$-modules such that $T'=\mu_X T$ for some indecomposable $A$-module $X$. We say that $T'$ is a left mutation ( right mutation, respectively) of $T$ and we write $T'=\mu_X^{-}T$ ( $T=\mu_X^{+}T$, respectively) if the following equivalent conditions are satisfied.
\begin{enumerate}[(a)]
\item $T>T'$ ($T<T'$, respectively).
\item $X \notin \F \,U$ ($X \in \F \,U$, respectively).
\item ${}^{\bot}(\tau U)\subseteq {}^{\bot}(\tau X)$ (${}^{\bot}(\tau U) \nsubseteq {}^{\bot}(\tau X)$, respectively).
\end{enumerate}
\end{defi}



\begin{defi}\cite[Definition 2.29]{AIR}
The support $\tau$-tilting quiver $Q(\sti\,A)$ of $A$ is defined as follows:
\begin{itemize}
\item The set of vertices consists of the isomorphisms classes of basic support $\tau$-tilting $A$-modules.
\item There is an arrow from $T$ to $U$ if $U$ is a left mutation of $T$.
\end{itemize}
\end{defi}

\begin{obs}
Note that this exchange graph is $n$-regular, where $n=|A|$ is the number of simple $A$-modules.

It follows from \cite[Corollary 2.34]{AIR} that the exchange quiver $Q(\sti\,A)$ coincides with the Hasse quiver of the partially ordered set $\sti\,A$.
\end{obs}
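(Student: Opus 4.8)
The plan is to establish the $n$-regularity of $Q(\sti\,A)$ by counting, for an arbitrary fixed vertex $T$, exactly how many neighbours it has, drawing entirely on the mutation theory already recorded above; the second assertion of the remark needs no separate argument, as it is simply \cite[Corollary 2.34]{AIR}. First I would pass from the basic support $\tau$-tilting module $T$ to its associated basic support $\tau$-tilting pair $(T,P)$, which is legitimate since the two notions coincide by \cite[Proposition 2.3]{AIR}. By definition of a support $\tau$-tilting pair one has $|T|+|P|=|A|=n$, so the pair carries exactly $n$ indecomposable components, namely the indecomposable summands of $T$ together with those of $P$.

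Next I would exhibit exactly one neighbour associated to each of these $n$ components. Fix an indecomposable component $X$ and delete it: if $X$ is a summand of $T$ we remove $(X,0)$, and if $X$ is a summand of $P$ we remove $(0,X)$. In either case the resulting pair $(U,Q)$ satisfies $|U|+|Q|=n-1$ and is therefore an almost complete support $\tau$-tilting pair. By \cite[Theorem 2.18]{AIR} it has exactly two complements; one of them recovers $(T,P)$, and the other produces a distinct support $\tau$-tilting module $T'=\mu_X T$. Running over the $n$ components thus yields $n$ mutations of $T$. I would then verify that these are pairwise distinct (distinct deleted components give distinct almost complete pairs $(U,Q)$, and conversely a mutation determines the deleted component) and that they are exhaustive (every edge of the exchange graph incident to $T$ arises by exchanging a single indecomposable component). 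Hence $T$ has precisely $n$ neighbours.

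Finally I would account for orientation. By Definition \ref{leftmutation}, each mutation $T'=\mu_X T$ is either a left mutation, so that $T>T'$ and there is an arrow $T\to T'$, or a right mutation, so that $T<T'$ and there is an arrow $T'\to T$; the two cases are separated by whether $X\in\F\,U$. Since between two mutation-related vertices there is exactly one such arrow, the arrows incident to $T$ — incoming and outgoing together — number exactly $n$, which is the desired $n$-regularity. The only delicate point is the bookkeeping that makes the assignment (indecomposable component of the pair) $\mapsto$ (neighbour of $T$) a genuine bijection: one must check both that no two distinct components yield the same neighbour and that the projective components $(0,X)$ contribute mutation sites on equal footing with the module summands, so that all $n$ slots are accounted for. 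Once this is in place, the concluding statement that $Q(\sti\,A)$ coincides with the Hasse quiver of the poset $\sti\,A$ follows at once from \cite[Corollary 2.34]{AIR}.
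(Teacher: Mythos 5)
This remark is stated in the paper without proof --- it simply records facts from \cite{AIR} (the $n$-regularity coming from the two-complements theorem, and the Hasse-quiver identification from [AIR, Corollary 2.34]). Your argument is correct and is essentially the intended one: you reconstruct the $n$-regularity exactly from the ingredients the paper quotes (each of the $n$ indecomposable components of the pair $(T,P)$ yields an almost complete support $\tau$-tilting pair, \cite[Theorem 2.18]{AIR} gives exactly one other complement, and Definition \ref{leftmutation} forces exactly one arrow, suitably oriented, per neighbour), so there is nothing to add.
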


\section{Extension and Restriction maps}

Throughout this section, we assume that $A$ is the one-point extension of $B$ by the projective $B$-module $P_0$. We study the relationship between the support $\tau$-tilting $B$-modules and the support $\tau$-tilting $A$-modules.

We start with a remark which shall be very useful for our purposes.

\begin{obs}\label{obsimp}

Let $Y$ be an  $A$-module  such that $\Ex_A(S,Y)=0$. Then $Y=Y'\oplus S^r$ with $Y'\in S^{\tiny{\mbox{perp}}}$ and $r\geq 0$.
In fact,  first assume that $\Hom_A(S,Y)=0$. Then, by (\ref{funcprop}), $Y=Y'$ and $r=0$.
Now if  $\Hom_A(S,Y)\neq 0$, then, by (\ref{funcprop}), $S$ is a direct summand of  $Y$, namely, $Y=S\oplus Z$. Note that $\Ex_A(S,Z)=0$. If $\Hom_A(S,Z)=0$ we are done. Otherwise, $S$ is a direct summand of $Z$ and $Z=Z_1\oplus S$. Moreover,  $Y=S^2\oplus Z'$. Iterating this argument over $Z_i$, for $i=1\cdots r-1$, we get $Y=Y'\oplus S^r$.
\end{obs}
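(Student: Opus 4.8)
The plan is to peel off copies of $S$ one at a time as direct summands, carrying the Ext-vanishing hypothesis along at each stage, and to invoke finite length (the Krull--Schmidt property of $\mo\,A$) to guarantee termination. The entire argument rests on Proposition~\ref{funcprop}(b): a nonzero element of $\Hom_A(S,Y)$ detects exactly when $S$ splits off $Y$ as a direct summand, so the presence of $S$-summands is governed purely by the $\Hom$ condition.

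First I would dispose of the base case. If $\Hom_A(S,Y)=0$, then together with the standing hypothesis $\Ex_A(S,Y)=0$ this says precisely that $Y\in S^{\tiny{\mbox{perp}}}$, so one takes $Y'=Y$ and $r=0$. For the inductive step, suppose instead $\Hom_A(S,Y)\neq 0$. By Proposition~\ref{funcprop}(b), $S$ is a direct summand of $Y$, say $Y=S\oplus Z$. The key point is to transport the Ext-vanishing to the complement $Z$: since $\Ex_A(S,\--)$ is additive on direct sums, $0=\Ex_A(S,Y)\cong\Ex_A(S,S)\oplus\Ex_A(S,Z)$, and hence $\Ex_A(S,Z)=0$. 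Thus $Z$ again satisfies the hypothesis of the remark while having strictly smaller $k$-dimension than $Y$.

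Iterating this procedure yields a chain $Y=S\oplus Z$, $Z=S\oplus Z_1$, and so on, in which every successive complement inherits the condition $\Ex_A(S,\--)=0$ and has strictly decreasing length. Because $Y$ is finite-dimensional, the process stops after finitely many, say $r$, steps at a module $Y'$ satisfying $\Hom_A(S,Y')=0$ (this is exactly the stopping condition) and $\Ex_A(S,Y')=0$ (preserved throughout); that is, $Y'\in S^{\tiny{\mbox{perp}}}$. Reassembling the extracted summands gives $Y=Y'\oplus S^{r}$, as claimed. The only subtlety I would flag is making the termination precise: this is where finite length is genuinely used, hand in hand with the additivity of $\Ex_A(S,\--)$ that keeps the hypothesis alive on the complement at each stage. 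No harder obstacle arises, since injectivity of $S$ and Proposition~\ref{funcprop}(b) already encode the nontrivial splitting mechanism.
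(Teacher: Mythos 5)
Your argument is correct and follows essentially the same route as the paper: use Proposition~\ref{funcprop}(b) to split off a copy of $S$ whenever $\Hom_A(S,Y)\neq 0$, observe that additivity of $\Ex_A(S,\--)$ passes the hypothesis to the complement, and iterate. Your explicit appeal to finite length to justify termination is a point the paper leaves implicit, but it is the same proof.
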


The following result shows us how to extend $\tau$-rigid $B$-modules.

\begin{teo}\label{teoextend}
Let $T$ be a   $\tau$-rigid $B$-module. Then $\E T\oplus S$ is a  $\tau$-rigid $A$-module.
\end{teo}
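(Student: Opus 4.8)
The plan is to verify $\tau$-rigidity through the Ext-projectivity criterion of Proposition \ref{propfac}(b): it suffices to prove that $\Ex_A(\E T\oplus S,N)=0$ for every $N\in\F(\E T\oplus S)$. So I would fix such an $N$ together with an epimorphism $(\E T)^a\oplus S^b\twoheadrightarrow N$, and first collect two elementary facts about $S$ from the preliminaries. Since $S$ is injective, $\Ex_A(-,S)=0$; and since $P_0$ is a $B$-module we have $\Hom_A(P_0,S)=0$, so the projective presentation $0\to P_0\to\widetilde P\to S\to 0$ forces $\Ex_A(S,S)=0$.

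The first real step is to show $\Ex_A(S,N)=0$. Writing $0\to K\to (\E T)^a\oplus S^b\to N\to 0$ and applying $\Hom_A(S,-)$, the term $\Ex_A(S,(\E T)^a\oplus S^b)$ vanishes because $\E T\in S^{\mathrm{perp}}$ (Proposition \ref{funcprop}(d)) and $\Ex_A(S,S)=0$, while $\mathrm{Ext}^2_A(S,K)=0$ because $\mathrm{pd}_A S\le 1$; hence $\Ex_A(S,N)=0$. This is exactly the hypothesis of Remark \ref{obsimp}, which then provides a decomposition $N=N'\oplus S^s$ with $N'\in S^{\mathrm{perp}}$.

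With this decomposition the computation splits. Injectivity of $S$ gives $\Ex_A(\E T,S^s)=0$, so $\Ex_A(\E T,N)=\Ex_A(\E T,N')$. As $N'\in S^{\mathrm{perp}}$, Proposition \ref{funcprop}(c) gives $N'\cong\E\R N'$, and then Proposition \ref{teoext}(a) together with the co-unit isomorphism $\R\E T\cong T$ of Proposition \ref{funcprop}(a) yields $\Ex_A(\E T,N')\cong\Ex_A(\E T,\E\R N')\cong\Ex_B(\R\E T,\R N')\cong\Ex_B(T,\R N')$. To finish I would check that $\R N'\in\F\,T$: applying the exact functor $\R$ to the epimorphism above, and using $\R\E T\cong T$ together with $\R S=0$ (as $S$ lies in the torsion-free class $\ad S$), gives an epimorphism $T^a\twoheadrightarrow\R N=\R N'$. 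Since $T$ is $\tau$-rigid, Proposition \ref{propfac}(b) gives $\Ex_B(T,\F\,T)=0$, so $\Ex_B(T,\R N')=0$ and therefore $\Ex_A(\E T,N')=0$. Combining this with $\Ex_A(S,N)=0$ proves $\Ex_A(\E T\oplus S,N)=0$, as required.

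I expect the main obstacle to be that $\F(\E T\oplus S)$ is \emph{not} contained in $S^{\mathrm{perp}}$ --- indeed $S$ itself is a factor of $\E T$, via the canonical sequence $0\to T\to\E T\to S^{r}\to 0$ --- so the Ext-isomorphisms relating $\mo\,A$ and $\mo\,B$ cannot be applied to a general factor directly. Remark \ref{obsimp} is precisely the device that circumvents this: it strips off the copies of $S$ and isolates a factor $N'$ genuinely in $S^{\mathrm{perp}}$, on which the equivalence $\mo\,B\simeq S^{\mathrm{perp}}$ and Proposition \ref{teoext}(a) become available.
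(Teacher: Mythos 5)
Your proposal is correct and follows essentially the same route as the paper's proof: the same reduction via Proposition \ref{propfac}(b), the same use of $\mbox{pd}_A S\le 1$ and the injectivity of $S$, the same application of Remark \ref{obsimp} to split off the copies of $S$, and the same transfer to $\Ex_B(T,\R N')$ via Proposition \ref{teoext} and the observation that $\R$ carries factors of $\E T$ to factors of $T$. The only (cosmetic, and if anything slightly cleaner) difference is that you treat an arbitrary factor of $\E T\oplus S$ uniformly instead of first invoking the decomposition $\F(\E T\oplus S)=\F(\E T)\oplus\F\,S$ as the paper does.
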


\begin{proof}
Consider $T$  a   $\tau$-rigid $B$-module. By Proposition \ref{propfac}, we have that $\mbox{Ext}^1_B(T,\mbox{Fac}T)=0$.
Let us show that $\Ex_A(\E T \oplus S,\F (\E T \oplus S))=0$.

Note that, in this case, $\F (\E T \oplus S) = \F (\E T)\oplus \F S$. Then,
\begin{eqnarray*}
\Ex_A(\E T\oplus S, \F (\E T \oplus S))&=&\Ex_A(\E T\oplus S, \F (\E T) \oplus \F\;S)\\
&=& \Ex_A(\E T, \F (\E T))\oplus \Ex_A(S, \F (\E T)) \oplus \Ex_A(\E T\oplus S, \F\; S)
\end{eqnarray*}
Since $\F\; S= \{0, S\}$ and $S$ is an injective module, then $\Ex_A(\E T\oplus S, \F\; S)=0$.

Now, we show that $\Ex_A(S, \F (\E T))=0$. Consider $Y \in \F (\E T)$. By definition, there exists an epimorphism $f:M \rightarrow Y$, with $M\in \ad(\E T)$. Applying $\Hom _A(S,\--)$ we have
\[\Ex_A(S,M)\rightarrow \Ex_A(S,Y)\rightarrow \mbox{Ext}^2_A(S,\mbox{Ker}f)\]
since $M \in \ad(\E T)$ and $\mbox{pd}_A S\leq 1$ then $\Ex_A(S,M)=0$  and $\mbox{Ext}^2_A(S,\mbox{Ker}f)=0$, respectively . Thus, $\Ex_A(S,Y)=0$. Then, $\Ex_A(S, \F (\E T))=0$.

Finally, we prove that $\Ex_A(\E T, \F (\E T))=0$. Let $W \in \F (\E T) $. By definition, there exists an epimorphism $g:N \rightarrow W$, with $N\in \ad(\E T)$. Applying the functor $\R$ to $g$, we get that $\R W \in \F \;T$, because $\R N \in \ad(T)$. Since $T$ is a  $\tau$-rigid $B$-module, then $\Ex_B(T,\R W)=0$.

On the other hand, since $W \in \F (\E T)$ and $\E T \in S^{\tiny{\mbox{perp}}}$, then $\Ex_A(S,W)=0$. By  Remark \ref{obsimp}, we have that $W= S^j \oplus W'$, with $W'\in S^{\tiny{\mbox{perp}}}$ and $j\geq 0$. Thus, by (\ref{teoext}),
\begin{eqnarray*}
\Ex_A(\E T,W) &=& \Ex_A(\E T,W')\oplus \Ex_A(\E T , S^j)\\
&=&\Ex_B(T,\R W')\\
&=&0.
\end{eqnarray*}
Therefore, $\Ex_A(\E T \oplus S,\F (\E T \oplus S))=0$. Moreover, by Proposition \ref{propfac},  $\E T \oplus S$ is a  $\tau$-rigid $A$-module.
\end{proof}





 Now, we consider the opposite problem, namely, given a $\tau$-rigid $A$-module $T$ we will prove that $\R T$ is a $\tau$-rigid $B$-module. In order to do that, first we establish a relationship between $\tau_B \R T$ and $\tau_A T$.

\begin{lema}\label{lemppm}
Let $P_1\rightarrow P_0 \rightarrow T \rightarrow 0$ be a minimal projective presentation of $T$ in $\mo\,A$ and  $Q_1\rightarrow Q_0 \rightarrow \R T \rightarrow 0$ be a minimal projective presentation of  $\R T$ in $\mo\,B$. Then $Q_i$ is a direct summand of $\R P_i$, for $i=0,1$. In particular, $Q_i$ is  isomorphic to a submodule of $P_i$,  for i=0,1.
\end{lema}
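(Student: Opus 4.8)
The plan is to apply the exact functor $\R$ to the given minimal projective presentation of $T$ and to compare the resulting presentation of $\R T$ with its minimal one, using that minimal presentations split off from arbitrary ones.

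First I would record the crucial consequence of the hypothesis that $P_0$ is projective: the functor $\R$ sends projective $A$-modules to projective $B$-modules. It suffices to check this on indecomposable projectives. For an indecomposable projective $A$-module lying in $\mo\,B$ the claim is immediate, while for the unique indecomposable projective $\widetilde{P}$ not belonging to $\mo\,B$ one computes $\R\widetilde{P}\cong P_0$, the module defining the extension, which is projective precisely by hypothesis. Since $\R$ is exact, applying it to $P_1\rightarrow P_0\rightarrow T\rightarrow 0$ yields an exact sequence
\[\R P_1 \rightarrow \R P_0 \rightarrow \R T \rightarrow 0\]
with $\R P_0,\R P_1$ projective $B$-modules; that is, a (not necessarily minimal) projective presentation of $\R T$ in $\mo\,B$.

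Next I would invoke the standard fact that, over an artin algebra, a minimal projective presentation is a direct summand of an arbitrary one. Concretely: $\R P_0\rightarrow\R T$ is an epimorphism from a projective module, so the projective cover $Q_0\rightarrow\R T$ splits off, giving $\R P_0\cong Q_0\oplus Q_0'$ with $Q_0'\subseteq\mbox{Ker}(\R P_0\rightarrow\R T)$. Writing $\Omega$ for the syzygy $\mbox{Ker}(Q_0\rightarrow\R T)$, the kernel of $\R P_0\rightarrow\R T$ is then $\Omega\oplus Q_0'$, and this is the image of $\R P_1\rightarrow\R P_0$. Thus there is an epimorphism $\R P_1\rightarrow\Omega\oplus Q_0'$; since projective covers are additive and $Q_1\rightarrow\Omega$ is the projective cover of $\Omega$ while $Q_0'$ is its own cover, the projective cover of $\Omega\oplus Q_0'$ is $Q_1\oplus Q_0'$, which therefore splits off $\R P_1$. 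In particular $Q_0$ is a direct summand of $\R P_0$ and $Q_1$ is a direct summand of $\R P_1$.

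Finally, for the \textquotedblleft in particular\textquotedblright\ clause I would use that $\R X$ is a submodule of $X$ under the embedding $\mo\,B\hookrightarrow\mo\,A$, as recalled before Proposition \ref{funcprop}. Hence $\R P_i$ is a submodule of $P_i$, and since $Q_i$ is a direct summand of $\R P_i$ it is in particular isomorphic to a submodule of $\R P_i$, and therefore of $P_i$, for $i=0,1$. I expect the main obstacle to be the bookkeeping in degree one: one must carefully track how the split summand $Q_0'$ of $\R P_0$ feeds into the kernel and hence into the projective cover of the syzygy, so as to conclude that $Q_1$ itself, and not merely $Q_1$ up to a projective summand, is a summand of $\R P_1$. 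The preservation of projectivity by $\R$, which is exactly where projectivity of $P_0$ is used, is the other point deserving care.
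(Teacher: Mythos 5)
Your proposal is correct and follows essentially the same route as the paper: apply the exact functor $\R$ to the minimal presentation of $T$ to obtain a (not necessarily minimal) projective presentation of $\R T$, then split off the minimal presentation $Q_\bullet$ as a direct summand and use $\R P_i\subseteq P_i$ for the final clause. Your added details — the verification that $\R$ preserves projectivity via $\R\widetilde{P}\cong P_0$ (the precise point where projectivity of $P_0$ enters) and the syzygy bookkeeping showing $Q_1$ itself splits off $\R P_1$ — are exactly the steps the paper leaves implicit.
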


\begin{proof}
Consider
\begin{equation}\label{presproy}
P_1\rightarrow P_0 \rightarrow T \rightarrow 0
\end{equation}
 a minimal projective presentation of $T$ in $\mo\,A$. Applying the functor $\R$ to (\ref{presproy}), we obtain the following exact sequence
\begin{equation}\label{presproy2}
\R P_1\rightarrow \R P_0 \rightarrow \R T \rightarrow 0
\end{equation}
which is a projective presentation of $\R T$, but it  is not necessarily minimal. By the universal property of the projective cover, we have the following commutative diagram with exact rows and columns:

\begin{center}
$
    \xymatrix  @!0 @R=1cm  @C=1.5cm {
       \R P_1\ar[d]\ar[r]&\R P_0\ar[d]\ar[r] & \R T\ar[d]\ar[r] & 0     \\
       Q_1\ar[d]\ar[r] & Q_0 \ar[d]\ar[r]& \R T \ar[d]\ar[r] & 0 \\
       0 & 0 &0 &}
        $
\end{center}
Since  $Q_0$ is a  projective $B$-modules, then $Q_0$ is a direct summand of $\R P_0$, and therefore $Q_0$ is a submodule of $P_0$. Similarly, $Q_1$ is a direct summand of $\R P_1$ and a submodule of $P_1$.
\end{proof}

\begin{teo}\label{teotrasladado}
Let  $T$ be an $A$-module. Then $\tau_A \R T$ is isomorphic to a submodule of $\tau_A T$.
\end{teo}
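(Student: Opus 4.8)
The plan is to compute both Auslander--Reiten translates from minimal projective presentations and to read off the inclusion directly from the canonical sequence of $T$ associated with the torsion pair $(\mo\,B,\ad\,S)$, using the Horseshoe Lemma. Throughout I use the standard description of the translate: for an $A$-module $M$ one has $\tau_A M\cong\ker(\nu_A\partial)$, where $\partial$ is the differential of a minimal projective presentation of $M$ and $\nu_A=D\Hom_A(-,A)$ denotes the Nakayama functor.

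First I would set up the two presentations to be combined. Start from the canonical sequence $0\to \R T\to T\to S^{\,r}\to 0$ with $r=\dim_k\Hom_A(T,S)$. Since $\mbox{pd}_A S\le 1$ and $\widetilde{P}$ is the projective cover of $S$ with radical $P_0$, the simple $S$ has the minimal projective resolution $0\to P_0\xrightarrow{\,\iota_0\,}\widetilde{P}\to S\to 0$, so $S^{\,r}$ has the minimal presentation $0\to P_0^{\,r}\xrightarrow{\iota}\widetilde{P}^{\,r}\to S^{\,r}\to 0$ with $\iota$ a monomorphism. Fixing a minimal projective presentation $V_1\xrightarrow{d_{\R T}}V_0\to \R T\to 0$ of the $A$-module $\R T$ and applying the Horseshoe Lemma to the canonical sequence, I obtain a projective presentation
\[
V_1\oplus P_0^{\,r}\xrightarrow{\,d\,}V_0\oplus \widetilde{P}^{\,r}\to T\to 0,\qquad
d=\begin{pmatrix} d_{\R T} & \beta\\ 0 & \iota\end{pmatrix},
\]
with upper triangular differential whose diagonal blocks are the two chosen differentials.

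Applying $\nu_A$ and using that it is additive, the matrix $\nu_A d$ keeps its triangular shape; a triangular map $\left(\begin{smallmatrix}\alpha&\ast\\0&\gamma\end{smallmatrix}\right)$ always yields an exact sequence $0\to\ker\alpha\to\ker\left(\begin{smallmatrix}\alpha&\ast\\0&\gamma\end{smallmatrix}\right)\to\ker\gamma$. Taking $\alpha=\nu_A d_{\R T}$ gives a monomorphism $\ker(\nu_A d_{\R T})\hookrightarrow\ker(\nu_A d)$, and since $d_{\R T}$ is a minimal projective presentation of $\R T$ the left-hand term is exactly $\tau_A\R T$. Thus $\tau_A\R T$ embeds into $\ker(\nu_A d)$.

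The remaining---and principal---point is to identify $\ker(\nu_A d)$ with $\tau_A T$, i.e. to check that the Horseshoe presentation computes the translate correctly despite being possibly non-minimal. The only way non-minimality could enlarge the kernel is through a direct summand of $V_1\oplus P_0^{\,r}$ on which $d$ vanishes; but any such summand must lie in $V_1$ because $\iota$ is a monomorphism (this is exactly where $\mbox{pd}_A S\le 1$ is used), and it must then be killed by $d_{\R T}$, which is impossible since $d_{\R T}$ is right minimal. Hence the non-minimality is confined to superfluous summands of $V_0\oplus\widetilde{P}^{\,r}$, which split off as contractible summands $P\xrightarrow{\cong}P$ that $\nu_A$ sends to isomorphisms and therefore contribute nothing to the kernel; consequently $\ker(\nu_A d)\cong\tau_A T$. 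Combining this with the previous step produces the desired embedding $\tau_A\R T\hookrightarrow\tau_A T$. I expect this last identification---the careful bookkeeping of minimality in the Horseshoe presentation---to be the real work; alternatively one can feed the minimal presentations compared in Lemma \ref{lemppm} into the same triangular argument to reach the conclusion.
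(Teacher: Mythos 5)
Your proof is correct, and it reaches the embedding $\tau_A\R T\hookrightarrow\tau_A T$ by a genuinely different route from the paper. The paper works in the opposite direction: its Lemma \ref{lemppm} applies $\R$ to a minimal projective presentation $P_1\to P_0\to T\to 0$ and uses the universal property of the projective cover to realize the minimal presentation $Q_1\to Q_0\to\R T\to 0$ as a direct summand of $\R P_1\to\R P_0\to\R T\to 0$, so that each $Q_i$ sits inside $P_i$; it then applies $\Hom_A(-,A)$ to this map of presentations, reads off an epimorphism $\mbox{Tr}_AT\to\mbox{Tr}_A\R T$ on cokernels, and dualizes. You instead start from the canonical sequence $0\to\R T\to T\to S^r\to 0$, build a presentation of $T$ by the Horseshoe Lemma on top of a minimal presentation of $\R T$, and extract the inclusion from the triangular shape of $\nu_A d$. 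The two checks you isolate are exactly the right ones: injectivity of $\iota$ (that is, $\mbox{pd}_AS\leq 1$) excludes superfluous degree-one summands meeting $P_0^{\,r}$, and right minimality of $d_{\R T}$ excludes them inside $V_1$, so the Horseshoe presentation, though possibly non-minimal, contains no summand of type $P\to 0$ and still computes $\tau_AT$ as $\ker(\nu_Ad)$. (One micro-step worth writing out: a direct summand of $V_1\oplus P_0^{\,r}$ contained in $V_1$ is automatically a direct summand of $V_1$; compose the splitting $V_1\oplus P_0^{\,r}\to K$ with the inclusion $V_1\hookrightarrow V_1\oplus P_0^{\,r}$.) A modest advantage of your version is that every surjectivity used is explicit, whereas the paper's diagram tacitly requires the restriction maps $\Hom_A(P_i,A)\to\Hom_A(Q_i,A)$ (or at least the induced map on cokernels) to be surjective, which is asserted but not argued there; an advantage of the paper's route is that the explicit inclusion $i:\tau_A\R T\to\tau_AT$ produced via Lemma \ref{lemppm} is reused verbatim in the proof of Theorem \ref{restrig}, though the monomorphism your construction yields would serve equally well.
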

\begin{proof}
By Lemma \ref{lemppm}, we have the following commutative diagram with exact rows and columns:

\begin{center}
$
    \xymatrix  @!0 @R=1cm  @C=1.5cm {
       0\ar[d]& 0\ar[d] & 0\ar[d] &     \\
       Q_1\ar[d]\ar[r] & Q_0 \ar[d]\ar[r]& \R T \ar[d]\ar[r] & 0 \\
       P_1 \ar[r] & P_ 0 \ar[r] & T \ar[r]& 0}
        $

\end{center}
Applying  $\Hom_A (\--,A)$ to the above diagram, we obtain the following commutative diagram with exact rows and columns:

\begin{center}
$
    \xymatrix  @!0 @R=1.2cm  @C=1.5cm {
       \Hom_A(P_0,A)\ar[d]\ar[rr]&&\Hom_A(P_1,A)\ar[d]\ar[rr] && \,\,\mbox{Tr}_AT\ar[d]\ar[r] & 0     \\
       \Hom_A(Q_0,A)\ar[d]\ar[rr] && \Hom_A(Q_1,A) \ar[d]\ar[rr]&& \mbox{Tr}_A \R T\ar[d]\ar[r] & 0 \\
       0 && 0 &&0 &}
        $

\end{center}
where the morphism $\mbox{Tr}_AT \rightarrow \mbox{Tr}_A\R T$ is the  morphism induced by passing through cokernels. Applying the duality functor to the epimorphism $\mbox{Tr}_AT \rightarrow \mbox{Tr}_A\R T$ , we get
\begin{equation*}
0\rightarrow \tau_A \R T \rightarrow \tau_A T.
\end{equation*}
Thus, $\tau_A(\R T)$ is isomorphic to a submodule of $\tau_A( T)$ proving the result.
\end{proof}

The following result is an immediate consequence of Theorem \ref{teotrasladado}.

\begin{cor}
Let $T$ be an $A$-module. Then,
\begin{enumerate}[(a)]
\item $\tau_B \R T$ is isomorphic to a submodule of $\tau_A T$.
\item $\tau_B \R T$ is isomorphic to a submodule of $\R (\tau_A T)$.
\end{enumerate}
\end{cor}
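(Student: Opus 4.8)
The plan is to deduce both statements from Theorem \ref{teotrasladado} by first comparing the two Auslander--Reiten translates $\tau_B$ and $\tau_A$ on the full subcategory $\mo\,B \subseteq \mo\,A$. Concretely, I would first prove the comparison lemma that for every $B$-module $N$ there is a natural isomorphism $\tau_B N \cong \R(\tau_A N)$, and then specialize it to $N=\R T$ and feed in the embedding provided by Theorem \ref{teotrasladado}.

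To establish the comparison lemma, the starting point is that a $B$-module $N$ admits the same minimal projective presentation over $B$ and over $A$. Indeed, since $A=B[P_0]$, for every vertex $i$ of $B$ the projective $A$-module $Ae_i$ has no composition factor equal to $S$, so it coincides with the projective $B$-module $P_i^B$ under the embedding $\mo\,B\hookrightarrow\mo\,A$; as $N$ has no $S$ in its top, its $A$-projective cover agrees with its $B$-projective cover, and inductively a minimal presentation $Q_1\rightarrow Q_0\rightarrow N\rightarrow 0$ over $B$ is also one over $A$. Applying the exact functor $-\otimes_A Ae_B$ (that is, right multiplication by $e_B$) to the cokernel presentation defining $\mbox{Tr}_A N$ and using $\Hom_A(Q_i,A)e_B\cong\Hom_B(Q_i,B)$ identifies $\mbox{Tr}_B N\cong(\mbox{Tr}_A N)e_B$. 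Dualizing, together with the identities $D(Ye_B)\cong e_B DY$ for right $A$-modules $Y$ and $\R(-)\cong e_B(-)$ on $\mo\,A$, yields $\tau_B N\cong\R(\tau_A N)$.

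With the lemma in hand both parts are immediate. For (b), specializing to $N=\R T$ gives $\tau_B\R T\cong\R(\tau_A\R T)$; Theorem \ref{teotrasladado} provides a monomorphism $\tau_A\R T\hookrightarrow\tau_A T$, and since $\R$ is exact it carries this to a monomorphism $\R(\tau_A\R T)\hookrightarrow\R(\tau_A T)$, whence $\tau_B\R T$ embeds into $\R(\tau_A T)$. For (a), one only adds the observation recalled in the preliminaries that $\R X$ is a submodule of $X$, so $\R(\tau_A T)\subseteq\tau_A T$; composing with the embedding of (b) gives $\tau_B\R T\hookrightarrow\tau_A T$.

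The only genuinely delicate point is the comparison lemma, and within it the verification that passing from $B$ to $A$ leaves the minimal projective presentation of a $B$-module unchanged --- this is exactly where the hypothesis that $P_0$ is projective (so that $A=B[P_0]$ keeps the $B$-projectives projective) is used --- together with the bookkeeping of left versus right modules through $\mbox{Tr}$ and $D$. Once this is settled, everything else is formal from the exactness of $\R$ and the inclusion $\R X\subseteq X$.
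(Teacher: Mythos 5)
Your argument is correct, but it reaches the two embeddings by a genuinely different route than the paper. The paper proves (a) first, by quoting the general fact from Auslander--Reiten--Smal\o\ that for a $B$-module $N$ (with $B$ a quotient of $A$) the translate $\tau_B N$ embeds into $\tau_A N$, applying it to $N=\R T$ and composing with Theorem \ref{teotrasladado}; it then deduces (b) from (a) simply by applying the exact functor $\R$ to that embedding and using that $\tau_B\R T$ is already a $B$-module. You instead prove the sharper comparison $\tau_B N\cong\R(\tau_A N)$ for \emph{every} $B$-module $N$ --- exploiting that in a one-point extension the projectives at the old vertices are unchanged, so a minimal projective presentation over $B$ is still minimal over $A$, and then tracking $\mbox{Tr}$ and $D$ through the idempotent $e_B$ --- and obtain (b) directly from Theorem \ref{teotrasladado} plus exactness of $\R$, with (a) falling out from the inclusion $\R X\subseteq X$. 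Your lemma is stronger than what the paper invokes (an isomorphism rather than a mere embedding, and self-contained rather than cited), and it has the virtue of isolating exactly where the defect in the paper's example comes from: $\tau_B\R T$ equals $\R(\tau_A\R T)$ on the nose, so any failure of equality with $\R(\tau_A T)$ is due entirely to $\tau_A\R T\subsetneq\tau_A T$. One small inaccuracy in your commentary, which does not affect the proof: the fact that the $B$-projectives remain projective over $A$ holds for an arbitrary one-point extension $B[M]$ (because the new vertex is a source, $e_{n+1}Ae_i=0$ for old $i$), so it is not where the projectivity of $P_0$ enters; that hypothesis is really consumed inside Lemma \ref{lemppm} and Theorem \ref{teotrasladado}, which you use as given.
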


\begin{proof}

$(a).$ Since  $\R T$ is a  $B$-module, it follows from \cite[V, Ex 5]{ARS} that  $\tau_B \R T$ is isomorphic to a submodule of $\tau_A \R T $. By Theorem \ref{teotrasladado}, we have that $\tau_A \R T $ is isomorphic to a submodule of  $\tau_A T$. Therefore,  $\tau_B \R T $ is isomorphic to a submodule of $\tau_A T$.

$(b).$ By Statement $(a)$, $\tau_B \R T$ is a submodule of $\tau_A T$. Since $\R$ is an exact functor and $\tau_B \R T$ is a $B$-module, then $\tau_B \R T$ is a submodule of $\R (\tau_A T)$.

\end{proof}

The following example shows  that  $\tau_B \R T $ is in general  a proper submodule of $\R (\tau_A(T))$.

\begin{ej}
Consider $B$   the algebra given by the quiver
$ \xymatrix{
1 \ar@/_/[r]_{\beta}
& 2 \ar@/_/[l]_{\alpha}
}$
with the relation $\alpha\beta=0$.

Let $A=B[P_2]$ be  the one-point extension of $B$ by the projective $P_2$. Then $A$ is given by the quiver $ \xymatrix{
1 \ar@/_/[r]_{\beta}
& 2 \ar@/_/[l]_{\alpha}
& 3 \ar[l]_{\gamma}
}$
with  $\alpha\beta=0$.

Let $M$ be a module whose composition factors are $\small{\txt{3\\2\\1}}$. Then we have that $\R M= \small{\txt{2\\1}}$ and $\tau_B \R M= \small{\txt{1\\2}}$.

On the other hand, $\tau_A M= \small{\txt{\,\,\,\,\,\,2\\ 3 \,\,1 \\2}}$ and $\R (\tau_A M)= \small{\txt{2\\1\\2}}$. Hence, one can clearly see that  $\tau_B \R M$ is  a proper submodule of $\R (\tau_A M)$.
\end{ej}
\vspace{0.1in}
The above result will lead us to obtain that the restriction functor behaves well with $\tau$-rigid modules, as we state in the next theorem.

\begin{teo}\label{restrig}
Let $T$ be a  $\tau$-rigid $A$-module. Then $\R T$ is a $\tau$-rigid $A$-module.
\end{teo}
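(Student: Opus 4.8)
The statement of Theorem \ref{restrig} contains a typo: it should read "$\R T$ is a $\tau$-rigid $B$-module" (not $A$-module), since $\R$ maps $A$-modules to $B$-modules.

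The plan is to show that $\R T$ is $\tau$-rigid by verifying $\Hom_B(\R T, \tau_B \R T) = 0$, using the already-established fact (the Corollary) that $\tau_B \R T$ embeds as a submodule of $\R(\tau_A T)$, together with the $\tau$-rigidity of $T$ over $A$. First I would invoke Proposition \ref{propfac}(a)-(b): since $T$ is $\tau$-rigid over $A$, we have $\Hom_A(T, \tau_A T) = 0$, which is what I want to transport down to $B$. The natural bridge is the adjunction $(\R, \E)$ together with the embedding of $\mo\,B$ into $\mo\,A$: recall from the preliminaries that when $\mo\,B$ is viewed inside $\mo\,A$, the module $\R X$ is literally a submodule of $X$.

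The key step is to relate $\Hom_B(\R T, \tau_B \R T)$ to a Hom-space over $A$ that I can control. Since $\R T$ is a $B$-module and $\tau_B \R T$ is a $B$-module, I would use that $\Hom_B(\R T, \tau_B \R T) \cong \Hom_A(\R T, \tau_B \R T)$ under the full embedding of $\mo\,B$ into $\mo\,A$ (the embedding is full and faithful, so Hom-spaces agree). By the Corollary, $\tau_B \R T$ is isomorphic to a submodule of $\R(\tau_A T)$, and $\R(\tau_A T)$ is in turn a submodule of $\tau_A T$ (via the canonical embedding). Thus I obtain a monomorphism $\tau_B \R T \hookrightarrow \tau_A T$ of $A$-modules. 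Applying the left-exact functor $\Hom_A(\R T, -)$ to this monomorphism yields an injection
\[
\Hom_A(\R T, \tau_B \R T) \hookrightarrow \Hom_A(\R T, \tau_A T).
\]
So it suffices to show the right-hand side vanishes.

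To finish, I would show $\Hom_A(\R T, \tau_A T) = 0$. Here I again use that $\R T$ is a submodule of $T$, giving an inclusion $\iota : \R T \hookrightarrow T$; but this direction is the wrong way to directly push forward, so instead I would argue via the canonical sequence $0 \to \R T \to T \to S^{r_T} \to 0$. Applying $\Hom_A(-, \tau_A T)$ gives the exact sequence
\[
\Hom_A(T, \tau_A T) \to \Hom_A(\R T, \tau_A T) \to \Ex_A(S^{r_T}, \tau_A T).
\]
The first term is zero since $T$ is $\tau$-rigid. For the connecting term, I would need $\Ex_A(S, \tau_A T) = 0$; this should follow because $\mbox{pd}_A S \le 1$ forces $S$ to behave well, and more specifically because $\tau_A T$ lies in a subcategory on which $\Ex_A(S,-)$ vanishes — an argument parallel to the computation $\Ex_A(S, \F(\E T)) = 0$ carried out in the proof of Theorem \ref{teoextend}.

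The main obstacle I anticipate is precisely the vanishing of the connecting term $\Ex_A(S, \tau_A T)$. Unlike in Theorem \ref{teoextend}, where the relevant modules lived in $\F(\E T) \subseteq S^{\mbox{perp}}$, here $\tau_A T$ has no a priori reason to lie in $S^{\mbox{perp}}$, so I cannot immediately conclude $\Ex_A(S, \tau_A T) = 0$. I would either need a structural fact about $\tau_A T$ for $T$ a $\tau$-rigid $A$-module (e.g. that $S$ being injective with $\mbox{pd}_A S \le 1$ constrains extensions into translates), or I would sidestep the connecting term entirely by working instead with the functorial characterization $\Hom_A(X, \tau_A Y) = 0 \iff \Ex_A(Y, \F X) = 0$ from Proposition \ref{propfac}(a) and transferring the computation through the isomorphisms of Proposition \ref{teoext}, which were designed exactly to move $\Ext$-groups between $A$ and $B$. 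This latter route — expressing $\tau$-rigidity of $\R T$ as an $\Ext$-vanishing condition via Proposition \ref{propfac} and then applying Proposition \ref{teoext}(c)'s epimorphism $\Ex_A(X,Y) \to \Ex_A(\R X, \R Y)$ — is likely the cleaner path and the one I would pursue first.
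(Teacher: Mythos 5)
Your opening note is itself mistaken: the statement is not a typo. The theorem deliberately asserts that $\R T$ is $\tau$-rigid \emph{as an $A$-module}, i.e.\ $\Hom_A(\R T,\tau_A\R T)=0$, and the $B$-module version is the corollary that immediately follows, deduced from the theorem via the inclusion $\tau_B\R T\subseteq\tau_A\R T$. So you are really aiming at the corollary; that would still be acceptable if your argument closed, but it does not. The reduction to $\Hom_A(\R T,\tau_A T)=0$ is fine, but the step you flag as the ``main obstacle'' is a genuine failure, not a technicality: $\Ex_A(S,\tau_A T)$ need not vanish for $T$ $\tau$-rigid. For instance, let $B$ be the path algebra of $1\leftarrow 2$ and $P_0=P_2$, so that $A=B[P_0]$ is the path algebra of $1\leftarrow 2\leftarrow 3$ and $S=S_3$. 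Then $T=S_1\oplus S_3$ is $\tau$-rigid with $\tau_AT=S_2$, and $\Ex_A(S_3,S_2)\neq 0$. (The desired conclusion still holds there because the connecting map $\Ex_A(S^{r_T},\tau_AT)\to\Ex_A(T,\tau_AT)$ happens to be injective, but you offer no reason why that should hold in general.) Your fallback route through Proposition \ref{teoext} is not carried out and runs into the same wall: to get $\Ex_B(\R T,W)\cong\Ex_A(T,\E W)=0$ for $W\in\F\,\R T$ you would need $\E W\in{}^{\bot}(\tau_AT)$, which is precisely the content of Proposition \ref{prop_ortog} and is proved there only for $X\in{}^{\bot}(\tau_B\R T)$ --- i.e.\ it presupposes control of $\tau_B\R T$, which is what you are trying to establish.

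For comparison, the paper attacks $\Hom_A(\R T,\tau_A\R T)$ directly. From $\Ex_A(T,T)=0$ and the epimorphism $\Ex_A(T,T)\to\Ex_A(\R T,\R T)$ of Proposition \ref{teoext}(c) it deduces $\Ex_A(\R T,\R T)=0$, hence $D\overline{\Hom}_A(\R T,\tau_A\R T)=0$, so every morphism $\R T\to\tau_A\R T$ factors through an injective $I$. Since $\R T$ is a submodule of $T$ and $I$ is injective, the map $\R T\to I$ extends to $T$, and composing with the monomorphism $\tau_A\R T\hookrightarrow\tau_AT$ of Theorem \ref{teotrasladado} yields a nonzero morphism $T\to\tau_AT$, contradicting $\tau$-rigidity of $T$. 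This factorization-through-injectives step is the idea your proposal lacks; without it, or some substitute, the connecting-term problem you correctly identified remains open and the proof is incomplete.
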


\begin{proof}
Consider $T$  a $\tau$-rigid $A$-module. Then $\Ex_A(T,T)=0$. By Proposition \ref{teoext},(c), we have that
$\Ex_A(\R T,\R T)=0$. Thus, $D\overline{\Hom}_A(\R T,\tau_A\R T)=0$.

Let us prove that  $\Hom_A(\R T, \tau_A \R T)=0$. Let $f: \R T \rightarrow \tau_A \R T$ be a nonzero morphism. Since $D\overline{\Hom}_A(\R T,\tau_A\R T)=0$, it follows that $f$ factors through an injective $A$-module $I$. Then  $f=hg$, with $g:\R T \rightarrow I$ and $h:I \rightarrow \tau_A \R T$. Since $\R T$ is a submodule of $T$ and $I$ is an injective $A$-module,  there exists a morphism  $g^*:T\rightarrow I$ such that $g=g^*j$, where $j:\R T \rightarrow T$ is the natural inclusion.
\begin{center}
$
    \xymatrix  @!0 @R=1cm  @C=1cm {
                &\R T \ar@{^{(}->}[dl]_j\ar[dr]^g\ar[rr]^f &           & \tau_A (\R T) \ar@{^{(}->}[rr]^i && \tau_A T     \\
       T \ar@{-->}[rr]_{g^{*}}&                           & I \ar[ur]_h &                      & & }
        $
\end{center}

Since $f \neq 0$, we have a nonzero morphism $ihg*:T\rightarrow \tau_A T$, where $i:\tau_A(\R T)\rightarrow \tau_A T$ is the inclusion morphism defined in the proof of  Theorem \ref{teotrasladado}. Therefore, $\Hom_A(T,\tau_A T)\neq 0$, contradicting the hypothesis. Hence,  $\R T$ is a  $\tau$-rigid $A$-module.
\end{proof}

As a consequence of above Theorem, we deduce the following corollary.

\begin{cor}
Let $T$ be a $\tau$-rigid $A$-module. Then $\R T $ is a $\tau$-rigid $B$-module.
\end{cor}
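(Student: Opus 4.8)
The corollary asserts that if $T$ is a $\tau$-rigid $A$-module, then $\R T$ is a $\tau$-rigid $B$-module. The preceding theorem (Theorem \ref{restrig}) has just established that $\R T$ is a $\tau$-rigid $A$-module, so the task is purely to pass from ``$\tau$-rigid over $A$'' to ``$\tau$-rigid over $B$'' for the $B$-module $\R T$.

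**Plan.** The natural approach is to compare the Auslander--Reiten translates $\tau_A(\R T)$ and $\tau_B(\R T)$ and use the earlier structural results. First I would recall that $\R T$ is genuinely a $B$-module (it lies in $\mo\,B$, viewed inside $\mo\,A$ via the convex embedding), so both $\tau_A(\R T)$ and $\tau_B(\R T)$ make sense. From Theorem \ref{restrig} we already know $\Hom_A(\R T,\tau_A\,\R T)=0$. The key comparison is the Corollary following Theorem \ref{teotrasladado}, part (a), which gives that $\tau_B\,\R T$ is isomorphic to a submodule of $\tau_A\,\R T$. Denote by $\iota:\tau_B\,\R T\hookrightarrow\tau_A\,\R T$ this inclusion.

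**Key step.** Suppose for contradiction that $\R T$ is not $\tau$-rigid as a $B$-module, i.e.\ there is a nonzero morphism $g:\R T\to\tau_B\,\R T$ in $\mo\,B$. Since $B$ is a full subcategory of $A$, a $B$-linear map between $B$-modules is in particular $A$-linear once we regard the modules inside $\mo\,A$; hence $g$ is also a nonzero morphism in $\mo\,A$. Composing with the inclusion $\iota$ yields a nonzero morphism $\iota g:\R T\to\tau_A\,\R T$ in $\mo\,A$, because $\iota$ is a monomorphism and $g\neq 0$. This contradicts $\Hom_A(\R T,\tau_A\,\R T)=0$ from Theorem \ref{restrig}. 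Therefore $\Hom_B(\R T,\tau_B\,\R T)=0$, which is precisely the statement that $\R T$ is $\tau$-rigid over $B$.

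**Main obstacle.** The only delicate point is justifying that a $B$-morphism into $\tau_B\,\R T$ really survives, unchanged and nonzero, as an $A$-morphism into $\tau_A\,\R T$ after composing with $\iota$; this rests on two facts already available in the excerpt: the identification of $\tau_B\,\R T$ with a submodule of $\tau_A\,\R T$ (the cited Corollary, part (a)), and the compatibility of $\Hom$ under the convex full embedding $B=eAe\hookrightarrow A$. Both are standard, so the argument is short. I expect no serious difficulty, and the corollary should follow immediately from Theorem \ref{restrig} together with the submodule comparison of the two translates.
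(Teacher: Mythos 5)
Your argument is correct and is essentially the paper's own proof: assume $\Hom_B(\R T,\tau_B\R T)\neq 0$, use that $\tau_B\R T$ embeds as a submodule of $\tau_A\R T$ (the fact from \cite[V, Ex 5]{ARS} used in the proof of the corollary to Theorem \ref{teotrasladado}) to produce a nonzero map $\R T\rightarrow\tau_A\R T$, and contradict Theorem \ref{restrig}. Your write-up merely makes explicit the (correct) point that $B$-morphisms between $B$-modules remain $A$-morphisms under the embedding $\mo\,B\hookrightarrow\mo\,A$.
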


\begin{proof}
Assume that $\Hom_B(\R T,\tau_B \R T)\neq 0$. Since  $\tau_B \R T$ is isomorphic to a submodule of $\tau_A \R T$, then $\Hom_A(\R T, \tau_A \R T)\neq 0$, which is a contradiction to Theorem \ref{restrig}. Therefore,  $\R T$ is a  $\tau$-rigid $B$-module.
\end{proof}

The next result is the main key to prove that the restriction of a support $\tau$-tilting $A$-module is a support $\tau$-tilting $B$-module.

\begin{prop}\label{prop_ortog}
Let  $T$ be a $\tau$-rigid $A$-module and $X$ be a $B$-module. If  $X\in {}^\bot(\tau_B\R T)$ then $\E X \in {}^\bot(\tau_A T)$.
\end{prop}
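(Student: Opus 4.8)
The plan is to reduce both the hypothesis and the conclusion to statements about the vanishing of $\Ex^1$ by means of Proposition \ref{propfac}(a), and then to transport the Ext-vanishing from $\mo\,B$ to $\mo\,A$ through the functors $\R$ and $\E$. Concretely, by Proposition \ref{propfac}(a) the hypothesis $X\in {}^{\bot}(\tau_B\R T)$, that is $\Hom_B(X,\tau_B\R T)=0$, is equivalent to
\[\Ex_B(\R T,\F\,X)=0,\]
while the desired conclusion $\E X\in {}^{\bot}(\tau_A T)$, that is $\Hom_A(\E X,\tau_A T)=0$, is equivalent to
\[\Ex_A(T,\F(\E X))=0.\]
So it suffices to prove the latter, and for this I would fix an arbitrary $Z\in\F(\E X)$ and show $\Ex_A(T,Z)=0$.

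Next I would analyse the shape of such a $Z$. Choosing an epimorphism $(\E X)^n\rightarrow Z$ and using that $\E X\in S^{\tiny{\mbox{perp}}}$ (Proposition \ref{funcprop}(d)) together with $\mbox{pd}_A S\leq 1$, the long exact sequence obtained by applying $\Hom_A(S,\--)$ to $0\rightarrow \mbox{Ker}\rightarrow (\E X)^n\rightarrow Z\rightarrow 0$ yields $\Ex_A(S,Z)=0$, since the outer terms $\Ex_A(S,(\E X)^n)$ and $\mbox{Ext}^2_A(S,\mbox{Ker})$ both vanish. Remark \ref{obsimp} then splits $Z=Z'\oplus S^r$ with $Z'\in S^{\tiny{\mbox{perp}}}$ and $r\geq 0$. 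Moreover, applying the exact functor $\R$ to the epimorphism $(\E X)^n\rightarrow Z$ and using the co-unit isomorphism $\R\E X\cong X$ (Proposition \ref{funcprop}(a)) together with $\R S=0$, I obtain an epimorphism $X^n\rightarrow \R Z=\R Z'$, so that $\R Z'\in \F\,X$ as a $B$-module.

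Finally I would treat the two summands separately. Since $Z'\in S^{\tiny{\mbox{perp}}}$, Proposition \ref{funcprop}(c) gives $Z'\cong \E\R Z'$, whence by Proposition \ref{teoext}(a)
\[\Ex_A(T,Z')\cong \Ex_A(T,\E\R Z')\cong \Ex_B(\R T,\R Z'),\]
and this vanishes because $\R Z'\in\F\,X$ and $\Ex_B(\R T,\F\,X)=0$ by the reformulated hypothesis. For the other summand, $\Ex_A(T,S^r)=0$ because $S$ is an injective $A$-module. Hence $\Ex_A(T,Z)=\Ex_A(T,Z')\oplus\Ex_A(T,S^r)=0$, and since $Z$ was arbitrary this proves $\Ex_A(T,\F(\E X))=0$, as required. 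The main obstacle I anticipate is precisely the control of the $S$-summands that can appear in an arbitrary factor module $Z$ of $\E X$: the category $S^{\tiny{\mbox{perp}}}$ is not closed under quotients, so one cannot simply assume $Z\in S^{\tiny{\mbox{perp}}}$ and invoke the isomorphism $\Ex_A(T,Z)\cong \Ex_B(\R T,\R Z)$ directly. The device that resolves this is the combination of $\mbox{pd}_A S\leq 1$ (to force $\Ex_A(S,Z)=0$ and hence the splitting of Remark \ref{obsimp}) with the injectivity of $S$ (to discard the resulting $S^r$ term).
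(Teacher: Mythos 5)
Your proof is correct and follows essentially the same route as the paper: reduce both sides to Ext-vanishing via Proposition \ref{propfac}(a), split an arbitrary $Y\in\F(\E X)$ as $Y'\oplus S^r$ with $Y'\in S^{\tiny{\mbox{perp}}}$ using Remark \ref{obsimp}, transport $\Ex_A(T,Y')\cong\Ex_B(\R T,\R Y')$ via Proposition \ref{teoext}(a), and discard the $S^r$ summand by injectivity of $S$. You even spell out more carefully than the paper does why $\Ex_A(S,Y)=0$ for $Y\in\F(\E X)$, using $\mbox{pd}_AS\leq 1$.
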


\begin{proof}
Let $X\in {}^\bot(\tau_B\R T)$. Then, $\Hom_B(X,\tau_B\R T)=0$. By Proposition \ref{propfac}, we have that $\Ex_B(\R T, \F X)=0$. We shall prove  that $\Ex_A( T, \F (\E X))=0$.

Let $Y \in \F (\E X)$, then there exists an epimorphism $f: M \rightarrow Y$, with  $M \in \ad (\E X)$. Since  $\E X \in S^{\tiny{\mbox{perp}}}$, then $\Ex_A(S,Y)=0$. Thus, by Remark (\ref{obsimp}),  we have that $Y=Y' \oplus S^r$, with  $Y'\in S^{\tiny{\mbox{perp}}}$ and $r\geq0$.

Applying the functor $\R$ to the morphism $f: M \rightarrow Y'\oplus S^r$, we obtain that $\R Y' \in \F X$, and thus  $\Ex_B(\R T, \R Y')=0$. Then, by Proposition \ref{teoext},(a), $\Ex_A(T, \E\R Y')=0$. Since $Y'\in S^{\tiny{\mbox{perp}}}$, then $\Ex_A(T,Y')=0$. Therefore,
\begin{eqnarray*}
\Ex_A(T,Y)&\cong&\Ex_A(T,Y'\oplus S^r)\\
&\cong &\Ex_A(T,Y')\oplus \Ex_A(T,S^r)\\
&\cong &0
\end{eqnarray*}
because $S$ is an injective module.

Then, $\Ex_A( T, \F (\E X))=0$ and, by Proposition \ref{propfac},(a), we get the result.
\end{proof}





Now, we are in position to prove Theorem A.

\begin{teo}\label{teopares}
Let $B$  be an algebra and $A=B[P_0]$. Then,
\begin{enumerate}[(a)]
\item If  $(M,Q)$ is a basic $\tau$-rigid  (support $\tau$-tilting, respectively) pair for $\mo\,B$, then   $(\E M \oplus S, Q)$ is a  $\tau$-rigid  (support $\tau$-tilting, respectively) pair for $\mo\,A$.
\item If $(T,P)$ is a basic $\tau$-rigid  (support $\tau$-tilting, respectively) pair for $\mo\,A$, then  $(\R T , P^*)$ is a  $\tau$-rigid  (support $\tau$-tilting, respectively) pair for $\mo\,B$, where $P^*$ is the projective $B$-module which is obtained  by $P$ removing the projective $A$-module  $\widetilde{P}$.
\end{enumerate}
\end{teo}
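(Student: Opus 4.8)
The plan is to verify, in each direction, the two defining conditions of a support $\tau$-tilting pair separately: first that the proposed pair is $\tau$-rigid (rigidity of the module part plus the $\Hom$-vanishing against the projective part), and then, for the support $\tau$-tilting assertion, the cardinality condition. For part (a) the rigidity of $\E M\oplus S$ is exactly Theorem \ref{teoextend}. To check the pair condition I would prove $\Hom_A(Q,\E M\oplus S)=0$: viewing the projective $B$-module $Q$ as the projective $A$-module supported at the same ($B$-)vertices, one has $\Hom_A(Q,S)=0$ since $S$ sits at the new vertex, while the adjunction $(\R,\E)$ together with $\R Q=Q$ (as $Q\in\mo\,B$) gives $\Hom_A(Q,\E M)\cong\Hom_B(\R Q,M)=\Hom_B(Q,M)=0$ by the hypothesis on $(M,Q)$. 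For the support case it then remains to count: since $\E$ is full and faithful and preserves indecomposables, and $\E M\in S^{\tiny{\mbox{perp}}}$ so that $S\notin\ad\,\E M$, we get $|\E M\oplus S|+|Q|=|M|+1+|Q|=|B|+1=|A|$.

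For part (b), the rigidity of $\R T$ over $B$ is the corollary to Theorem \ref{restrig}. For the $\Hom$-vanishing I would apply $\Hom_A(P^*,\--)$ to the canonical sequence $0\to\R T\to T\to S^{r_{_T}}\to 0$; since $P^*$ is a direct summand of $P$ as an $A$-module we have $\Hom_A(P^*,T)=0$, and left exactness yields $\Hom_A(P^*,\R T)=0=\Hom_B(P^*,\R T)$. This settles the $\tau$-rigid pair statement.

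The essential point, and where I expect the main obstacle, is the cardinality identity $|\R T|+|P^*|=|B|$: the difficulty is that $\R$ does not in general preserve the number of indecomposable summands (already $\R\widetilde P=P_0$ may split). I would therefore split on whether $\widetilde P\in\ad\,P$. If $\widetilde P\in\ad\,P$, then the idempotent attached to $P$ contains the new vertex, and since killing the new vertex of $A$ recovers $B$, the module $T$ is in fact a $B$-module; hence $\R T=T$ and $|\R T|+|P^*|=|T|+|P|-1=|A|-1=|B|$. If $\widetilde P\notin\ad\,P$ (so $P=P^*$), I would instead use that a $\tau$-rigid pair is support $\tau$-tilting exactly when it is maximal (\cite{AIR}), and show $(\R T,P)$ admits no proper enlargement. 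An enlargement of the projective part by an indecomposable projective $B$-module $X$ lifts directly to a $\tau$-rigid pair $(T,P\oplus X)$ over $A$ (applying $\Hom_A(X,\--)$ to the canonical sequence as above to get $\Hom_A(X,T)=0$), contradicting the maximality of $(T,P)$. An enlargement of the module part by an indecomposable $X\in{}^{\bot}(\tau_B\R T)$ lifts to $(T\oplus\E X,P)$: indeed $\E X$ is $\tau$-rigid by Theorem \ref{teoextend}, $\Hom_A(\E X,\tau_A T)=0$ by Proposition \ref{prop_ortog}, $\Hom_A(P,\E X)\cong\Hom_B(P,X)=0$ by adjunction and the pair hypothesis, and $\E X\notin\ad\,T$ because $\R\E X=X\notin\ad\,\R T$.

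The delicate step inside this last lift is $\Hom_A(T,\tau_A\E X)=0$, equivalently $\Ex_A(\E X,\F T)=0$ by Proposition \ref{propfac}: for $Z\in\F T$ I would use the canonical sequence $0\to\R Z\to Z\to S^{r_{_Z}}\to 0$, observe that $\Ex_A(\E X,S^{r_{_Z}})=0$ because $S$ is injective, and that $\Ex_A(\E X,\R Z)\cong\Ex_B(X,\R Z)=0$ by Proposition \ref{teoext}(b) together with $\R Z\in\F\,\R T$ (by exactness of $\R$) and the $\tau$-rigidity of $\R T\oplus X$. This contradicts the maximality of $(T,P)$, so $(\R T,P)$ is maximal $\tau$-rigid and hence support $\tau$-tilting over $B$, giving the count. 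I regard the injectivity of $S$ here, used to reduce the vanishing of $\Ex_A(\E X,\--)$ from all of $\F T$ to its torsion-free $B$-part where Proposition \ref{teoext}(b) applies, as the technical heart of the argument, and the $\widetilde P\in\ad\,P$ versus $\widetilde P\notin\ad\,P$ dichotomy as the device that tames the otherwise ill-behaved summand count.
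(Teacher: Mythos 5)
Your part (a), the $\tau$-rigid-pair half of part (b), and the case $\widetilde P\in\ad\,P$ of the support statement coincide with the paper's proof (Theorem \ref{teoextend}, the adjunction computation for $\Hom_A(Q,\E M\oplus S)$, the summand count, and the inclusion $\R T\subseteq T$ to kill $\Hom_A(P^*,\R T)$). In the remaining case $\widetilde P\notin\ad\,P$ you diverge: the paper verifies the criterion $\F\,\R T={}^{\bot}(\tau_B\R T)\cap P^{\bot}$ of \cite[Corollary 2.13]{AIR} by sending a test module $Y\in{}^{\bot}(\tau_B\R T)\cap P^{\bot}$ into $\mo\,A$ via $\E$ and invoking Proposition \ref{prop_ortog}, whereas you argue by maximality and try to lift a hypothetical enlargement $(\R T\oplus X,P)$ to a $\tau$-rigid pair $(T\oplus\E X,P)$.

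That lift has a genuine gap, exactly at the step you call the technical heart. To get $\Hom_A(T,\tau_A\E X)=0$ you need $\Ex_A(\E X,Z)=0$ for all $Z\in\F\,T$, and you reduce this to $\Ex_A(\E X,\R Z)\cong\Ex_B(X,\R Z)$ via Proposition \ref{teoext}(b). That proposition requires the second argument to lie in $S^{\mathrm{perp}}$, and a $B$-module $N$ viewed in $\mo\,A$ satisfies $\Hom_A(S,N)=0$ but in general not $\Ex_A(S,N)=0$: from the resolution $0\to P_0\to\widetilde P\to S\to 0$ and $\Hom_A(\widetilde P,N)=0$ one gets $\Ex_A(S,N)\cong\Hom_B(P_0,N)$, which is usually nonzero. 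The asserted isomorphism actually fails. Take $B$ the path algebra of $2\to 1$, $P_0=P_1=S_1$, and $A=B[P_1]$ the path algebra of $2\to 1\leftarrow 3$; let $X=P_2^B$ (dimension vector $(1,1)$) and $N=S_1$. Then $\E X$ has dimension vector $(1,1,1)$ with minimal projective resolution $0\to P_1\to P_2\oplus P_3\to \E X\to 0$, so $\Ex_A(\E X,S_1)\cong k$, while $\Ex_B(X,S_1)=0$ because $X$ is projective over $B$. The underlying difficulty is that $\F\,T$ need not be contained in $S^{\mathrm{perp}}$ (unlike $\F(\E M)$, which is, by the argument inside Theorem \ref{teoextend}), so neither Remark \ref{obsimp} nor Proposition \ref{teoext}(b) is available there; this is precisely what the paper's choice of direction --- testing membership in $\F\,\R T$ inside $\mo\,B$ and extending into $S^{\mathrm{perp}}\subseteq\mo\,A$, rather than restricting all of $\F\,T$ --- is designed to avoid. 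The other ingredients of your maximality argument (the projective-part enlargement, $\Hom_A(\E X,\tau_A T)=0$ via Proposition \ref{prop_ortog}, $\Hom_A(P,\E X)=0$ by adjunction, $\E X\notin\ad\,T$) are sound, so the cleanest repair is to replace the module-part step by the paper's verification of $\F\,\R T={}^{\bot}(\tau_B\R T)\cap P^{\bot}$.
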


\begin{proof}
$(a).$ Let $(M,Q)$ be a $\tau$-rigid pair for $\mo\,B$. By Theorem \ref{teoextend}, we know that $\E M \oplus S$ is a $\tau$-rigid $A$-module. On the other hand,
\begin{eqnarray*}
  \Hom_A(Q,\E M\oplus S) &\cong& \Hom_A(Q,\E M)\oplus \Hom_A(Q,S)  \\
   &\cong&\Hom_B(\R Q, M)  \\
    &\cong&\Hom_B(Q,M)\\
    &\cong&0
\end{eqnarray*}
where $\Hom_A(Q,S)=0$ because $Q$ is a $B$-module. Hence  $(\E M \oplus S, Q)$ is a  $\tau$-rigid pair for $\mo\,A$.

In addition, if   $(M,Q)$ is a support $\tau$-tilting pair, then  $|M|+|Q|=|B|$. Since $\E$ is a faithful functor, then $|M|=|\E M|$. Moreover, since  $\E M \in S^{\tiny{\mbox{perp}}}$ then $S$ is not a direct summand of  $\E M$. Hence, $|\E M \oplus S|=|\E M|+1$ and
\begin{eqnarray*}
 |\E M \oplus S|+|Q|&=&1+|\E M|+|Q|\\
 &=&1+|B|\\
 &=&|A|.
\end{eqnarray*}

$(b).$ Let $(T,P)$ be a  $\tau$-rigid pair for $\mo\,A$. By Theorem  \ref{restrig}, we have that $\R T$ is a  $\tau$-rigid $B$-module. Therefore, it is only left to prove that $\Hom_B(P^*,\R T)=0$. We know that $\R T$ is a submodule of $T$, then
\[0\rightarrow \Hom_A(P^*,\R T)\rightarrow \Hom_A(P^*,T)=0.\]
Thus , $\Hom_B(P^*,\R T)=0$. Hence, $(\R T, P^*)$ is a  $\tau$-rigid pair for $\mo\,B$.

In addition, if $(T,P)$ is a support $\tau$-tilting pair for $\mo\,A$,  we shall show that $(\R T, P^*)$ is a  support $\tau$-tilting pair for $\mo\,B$.

First, assume that $\widetilde{P}$ is a direct summand of $P$. Then $T$ is a  $B$-module, and thus $\R T \cong T$. Therefore, we have
\begin{eqnarray*}
|\R T|+|P^*|&=&|T|+|P|-1\\
&=&|A|-1\\
&=&|B|.
\end{eqnarray*}

Next, assume that $\widetilde{P}$ is not a direct summand of $P$. Then $P^*=P \cong \R P$. According to \cite[Corollary 2.13]{AIR}, we have to prove that $\F \R T= {}^\perp(\tau_B\R T)\cap P^{ \perp}$. Since  $\Hom_B(\R T, \tau_B \R T)=0$ and $\Hom_B(P,\R T)=0$, then $\F \R T \subseteq {}^\perp(\tau_B\R T)\cap P^{ \perp}$.

Let  $Y\in {}^\perp(\tau_B\R T)\cap P^{ \perp}$. By Proposition \ref{prop_ortog}, we have that $\E Y \in {}^\perp (\tau_A M)$. On the other hand,
\begin{eqnarray*}
\Hom_A(P,\E Y)&=&\Hom_B(\R P, Y)\\
&=& \Hom_B(P,Y)\\
&=&0.
\end{eqnarray*}
Then, $\E Y \in  {}^\perp(\tau_A T)\cap P^{ \perp}= \F T$. Thus, $Y \cong \R\E Y \in \F \R T$. Hence $(\R M, P^*)$ is a support $\tau$-tilting pair for $\mo\,B$.

\end{proof}

In our next corollary, we state a particular case of the above result.

\begin{cor} Let $B$ an algebra and $A=B[P_0]$. Then,
\begin{enumerate}[(a)]
\item If $M$ is a $\tau$-tilting $B$-module,  then $\E M\oplus S$ is a  $\tau$-tilting $A$-module.
\item If $T$ is a $\tau$-tilting $A$-module. Then $\R T$ is  a $\tau$-tilting $B$-module.
\end{enumerate}
\end{cor}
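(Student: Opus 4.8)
The plan is to deduce both statements directly from Theorem \ref{teopares} by specializing to support $\tau$-tilting pairs whose projective component vanishes. The key observation to record first is that a $\tau$-tilting module is precisely a support $\tau$-tilting pair with zero projective part: if $N$ is a $\tau$-tilting $C$-module then $N$ is $\tau$-rigid with $|N|=|C|$, so $(N,0)$ is a support $\tau$-tilting pair for $\mo\,C$ (with associated idempotent $0$), and conversely a support $\tau$-tilting pair of the form $(N,0)$ forces $|N|=|C|$, hence $N$ is $\tau$-tilting. This translation between modules and pairs is exactly the content recalled after the definition of support $\tau$-tilting pair, following \cite[Proposition 2.3]{AIR}, so no extra argument is needed to justify it.

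For part (a), I would start from a $\tau$-tilting $B$-module $M$ and rewrite it as the support $\tau$-tilting pair $(M,0)$ for $\mo\,B$. Applying Theorem \ref{teopares}(a) with $Q=0$, we obtain that $(\E M\oplus S,0)$ is a support $\tau$-tilting pair for $\mo\,A$. Since its projective component is zero, this says exactly that $\E M\oplus S$ is a $\tau$-tilting $A$-module, which is the desired conclusion.

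For part (b), I would begin with a $\tau$-tilting $A$-module $T$, viewed as the support $\tau$-tilting pair $(T,0)$ for $\mo\,A$. Because the projective part $P=0$, the indecomposable projective $\widetilde{P}$ is certainly not a summand of $P$, so the module $P^{*}$ produced in Theorem \ref{teopares}(b) is also zero. Applying that theorem yields that $(\R T,0)$ is a support $\tau$-tilting pair for $\mo\,B$, which means precisely that $\R T$ is a $\tau$-tilting $B$-module.

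There is essentially no obstacle here: the corollary is a routine specialization of the general Theorem A, and the only point that must be made explicit is the identification of $\tau$-tilting modules with those support $\tau$-tilting pairs having trivial projective summand. In particular, no further computation with the functors $\R$ and $\E$ is required, since all of the analytic work—extending $\tau$-rigidity, controlling $S^{\tiny{\mbox{perp}}}$, and counting the numbers of summands via the faithfulness of $\E$—has already been carried out in the proof of Theorem \ref{teopares}.
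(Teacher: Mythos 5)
Your proposal is correct and matches the paper's intent exactly: the paper states this corollary without proof as "a particular case" of Theorem \ref{teopares}, and the specialization to pairs $(M,0)$ and $(T,0)$ with zero projective component (noting $P^{*}=0$ in part (b)) is precisely the intended argument. No further comment is needed.
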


It follows directly from Theorem \ref{teopares} that we get morphisms between the corresponding posets of support $\tau$-tilting modules, as we show in the following corollary.

\begin{cor}\label{cormaps}
The functors  $\E$ and $\R$ induce two maps:
\begin{eqnarray*}
e: \sti \, B &\rightarrow& \sti\, A\\
(M,Q) &\rightarrow&  (\E M \oplus S, Q)
\end{eqnarray*}
and,
\vspace{-0.3in}
\begin{eqnarray*}
r: \sti \,A &\rightarrow& \sti\, B\\
(T,P) &\rightarrow& (\widehat{T},P^*)
\end{eqnarray*}
where  $\widehat{T}$ is a (unique up to isomorphism) basic $\tau$-rigid $B$-module such that $\mbox{add}\widehat{T} = \mbox{add}\R T$.  Moreover, the composition $re=\mbox{id}_{ \sti \, B}$.
\end{cor}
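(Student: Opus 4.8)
The plan is to derive the whole statement from Theorem~\ref{teopares} together with the functorial properties recorded in Proposition~\ref{funcprop}; the only genuine computation is that of the composite $re$.

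First I would establish that both maps are well defined. For $e$, given a basic support $\tau$-tilting pair $(M,Q)$ for $\mo\,B$, Theorem~\ref{teopares}(a) shows that $(\E M\oplus S,Q)$ is a support $\tau$-tilting pair for $\mo\,A$; it is basic because $\E$ is full and faithful, hence sends the pairwise non-isomorphic indecomposable summands of $M$ to pairwise non-isomorphic indecomposables, and because $S\notin\ad(\E M)$ as $\E M\in S^{\tiny{\mbox{perp}}}$. For $r$, given a basic support $\tau$-tilting pair $(T,P)$ for $\mo\,A$, Theorem~\ref{teopares}(b) shows that $(\R T,P^*)$ is a support $\tau$-tilting pair for $\mo\,B$. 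Since $\R T$ is $\tau$-rigid, it has a basic representative $\widehat{T}$, unique up to isomorphism, with $\ad\widehat{T}=\ad\R T$; this makes $r$ a well defined map into $\sti\,B$.

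The heart of the statement is the identity $re=\mbox{id}_{\sti\,B}$, which I would check on an arbitrary $(M,Q)\in\sti\,B$. Applying $e$ yields $(\E M\oplus S,Q)$, and I then compute $r$ of this pair. For the module component, additivity of the exact functor $\R$ gives $\R(\E M\oplus S)\cong\R\E M\oplus\R S$. Here $\R S=0$, because $S$ lies in the torsion-free class $\ad\,S$ of the torsion pair $(\mo\,B,\ad\,S)$ of which $\R$ is the torsion radical, while $\R\E M\cong M$ since the co-unit $\epsilon_M$ is an isomorphism by Proposition~\ref{funcprop}(a). Thus $\R(\E M\oplus S)\cong M$, and as $M$ is basic the normalization $\widehat{\E M\oplus S}$ equals $M$. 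For the projective component, $Q$ is a projective $B$-module and hence has no summand isomorphic to $\widetilde{P}$ (which is not a $B$-module), so the operation $P\mapsto P^*$ leaves it unchanged and $Q^*=Q$. Combining, $r(e(M,Q))=(M,Q)$, as required.

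The step I expect to need the most care is the projective bookkeeping: one must confirm that the deletion of $\widetilde{P}$ built into $r$ and the passage to basic representatives on both sides are mutually consistent, so that no summand is lost or created when $Q$ is transported from $\mo\,B$ to $\mo\,A$ and back. By contrast, the two module-theoretic inputs $\R S=0$ and $\R\E M\cong M$ are immediate from the cited facts, so the essential content of the corollary is precisely that these identifications are compatible with the support $\tau$-tilting pairs produced by Theorem~\ref{teopares}.
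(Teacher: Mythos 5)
Your proposal is correct and follows the same route as the paper: well-definedness comes from Theorem~\ref{teopares}, and the identity $re=\mbox{id}_{\sti\,B}$ comes from $\R\E\cong\mbox{id}_{\mo\,B}$ (Proposition~\ref{funcprop}(a)). The paper's own proof is just these two observations stated in one line; your additional checks ($\R S=0$, basicness, $Q^{*}=Q$) are the routine details it leaves implicit.
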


\begin{proof}
By Theorem (\ref{teopares}), $r$ and $e$ are maps. Moreover, the relation $re=\mbox{id}_{ \sti \, B}$ follows from $\R\E \cong  \mbox{id}_{\mo\,B}$.
\end{proof}

Now, we discuss the torsion pairs corresponding to a $\tau$-tilting module $T$. We recall that if $T$ is a $\tau$-tilting module over an algebra $C$, then $T$ determines a torsion pair $(^{\bot}\tau T, T^{\bot})$ in $\mo\,C$.

\begin{teo}
\begin{enumerate}[(i)]
\item Let $T$ be a $\tau$-tilting $B$-module and $X$ be a $B$-module. Then the following conditions hold.
\begin{enumerate}[(a)]
\item $X \in ^{\bot}\tau_B T$ if and only if $\E X \in ^{\bot}(\tau_A \E T)$.
\item  $X \in T^{\bot}$ if and only if $\E X \in \E T^{\bot}$.
\end{enumerate}
\item Let $T$ be a $\tau$-tilting $A$-module. Then the following conditions hold.
\begin{enumerate}[(a)]
\item If $(^{\bot}\tau_A T, T^{\bot})$ is a hereditary torsion pair for $\mo\,A$ then $(^{\bot}(\tau_B \R T), (\R T)^{\bot})$ is an hereditary torsion pair for $\mo\,B$.
\item If $(^{\bot}\tau_A T, T^{\bot})$ is a splitting torsion pair for $\mo\,A$ then $(^{\bot}(\tau_B \R T), (\R T)^{\bot})$ is a splitting torsion pair for $\mo\,B$.
\end{enumerate}
\end{enumerate}
\end{teo}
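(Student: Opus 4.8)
The plan is to funnel all four statements through a single two-way membership criterion relating the relevant torsion classes over $A$ and $B$, and then to read off the hereditary and splitting properties from the good behaviour of the functors: exactness and full faithfulness of $\E$, its preservation of indecomposables, and the adjunction $(\R,\E)$.

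The first and main step is to upgrade Proposition \ref{prop_ortog} to an equivalence: for a $\tau$-rigid $A$-module $T$ and a $B$-module $X$,
\[
X\in {}^{\bot}(\tau_B\R T)\iff \E X\in {}^{\bot}(\tau_A T).
\]
The forward implication is precisely Proposition \ref{prop_ortog}. For the converse I would rewrite both conditions, via Proposition \ref{propfac}(a), as the vanishing of $\Ex_B(\R T,\F X)$ and of $\Ex_A(T,\F(\E X))$, respectively; then, given $Z\in\F X$, exactness of $\E$ produces $\E Z\in\F(\E X)$, so $\Ex_A(T,\E Z)=0$, and Proposition \ref{teoext}(a) identifies $\Ex_A(T,\E Z)$ with $\Ex_B(\R T,Z)$, yielding $\Ex_B(\R T,\F X)=0$. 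With this equivalence, statement (i)(a) is the instance in which the $\tau$-rigid $A$-module is $\E T$ (a summand of the $\tau$-rigid module $\E T\oplus S$ of Theorem \ref{teoextend}), using $\R\E T\cong T$; and statement (i)(b) is immediate from full faithfulness of $\E$, which gives $\Hom_A(\E T,\E X)\cong\Hom_B(T,X)$, so $X\in T^{\bot}$ if and only if $\E X\in(\E T)^{\bot}$.

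For part (ii) I would first note that, $T$ being $\tau$-tilting over $A$, $\R T$ is $\tau$-tilting over $B$, so $({}^{\bot}(\tau_B\R T),(\R T)^{\bot})$ is genuinely a torsion pair and only its extra property needs transporting. For (ii)(a) I would use that a torsion pair is hereditary exactly when its torsion class is closed under submodules: given $X\in{}^{\bot}(\tau_B\R T)$ and a $B$-submodule $X'\hookrightarrow X$, exactness of $\E$ yields an $A$-monomorphism $\E X'\hookrightarrow\E X$; the equivalence gives $\E X\in{}^{\bot}(\tau_A T)$, heredity over $A$ forces $\E X'\in{}^{\bot}(\tau_A T)$, and the equivalence returns $X'\in{}^{\bot}(\tau_B\R T)$. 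For (ii)(b) I would use that a torsion pair is splitting exactly when every indecomposable lies in the torsion or the torsion-free class: for $X$ an indecomposable $B$-module, $\E X$ is an indecomposable $A$-module, so by the splitting hypothesis over $A$ either $\E X\in{}^{\bot}(\tau_A T)$, whence $X\in{}^{\bot}(\tau_B\R T)$ by the equivalence, or $\E X\in T^{\bot}$, in which case the adjunction $\Hom_A(T,\E X)\cong\Hom_B(\R T,X)$ gives $X\in(\R T)^{\bot}$.

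I expect the principal obstacle to be the converse direction of the displayed equivalence, since it is the one place not already packaged as a cited result: one must check that applying $\E$ carries $\F X$ into $\F(\E X)$ and combine this with Proposition \ref{teoext}(a) to obtain the reverse vanishing. Everything else, namely the injectivity of $S$, the identity $\R\E\cong\mbox{id}_{\mo\,B}$, and the exactness and faithfulness of $\E$, is already recorded, so once the equivalence is secured the parts (ii)(a) and (ii)(b) reduce to the standard reformulations of ``hereditary'' and ``splitting'' transported across the exact, indecomposable-preserving functor $\E$.
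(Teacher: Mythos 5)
Your proposal is correct, and its overall architecture matches the paper's: (i)(b) and the torsion-free half of (ii)(b) come from the adjunction $(\R,\E)$, while (i)(a), (ii)(a) and the torsion half of (ii)(b) come from transporting membership in ${}^{\bot}(\tau)$ across $\E$. The one genuine difference is how you organize that transport. The paper never states the two-way equivalence $X\in{}^{\bot}(\tau_B\R T)\Leftrightarrow\E X\in{}^{\bot}(\tau_A T)$; it argues instead through the identification ${}^{\bot}(\tau T)=\F\,T$ available for $\tau$-tilting modules, proving (i)(a) by reducing to ``$X\in\F\,T$ iff $\E X\in\F(\E T)$'' and handling the return direction in (ii)(a) via $\E Y\in\F\,T\Rightarrow Y\cong\R\E Y\in\F\,\R T={}^{\bot}(\tau_B\R T)$. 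Your version --- Proposition \ref{prop_ortog} for the forward direction and, for the converse, deducing $\Ex_B(\R T,\F\,X)=0$ from $\Ex_A(T,\F(\E X))=0$ using $\E(\F\,X)\subseteq\F(\E X)$ and Proposition \ref{teoext}(a) --- needs only $\tau$-rigidity rather than $\tau$-tiltingness. That extra generality actually pays off in (i)(a): there the relevant $A$-module is $\E T$, which is $\tau$-rigid (a summand of $\E T\oplus S$) but not $\tau$-tilting, so the identity ${}^{\bot}(\tau_A\E T)=\F(\E T)$ that the paper's one-line proof implicitly invokes is not literally available from Theorem \ref{teotauequiv}, whereas your Ext-vanishing equivalence applies verbatim together with $\R\E T\cong T$. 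Everything else (closure under submodules for hereditary, testing indecomposables for splitting, exactness and preservation of indecomposability by $\E$) coincides with the paper's argument, your treatment of (ii)(b) being slightly more careful in restricting to indecomposable $X$.
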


\begin{proof}
$(i).$
$(a).$ Since $T$ is a $\tau$-tilting $A$-module, we know that $^{\bot}\tau_A T= \F T$. Then the result follows from the fact that $X\in \F T$ if and only if $\E X\in \F \E T$.

$(b).$ Follows from the fact that
\begin{eqnarray*}
\Hom_A(\E T, \E X)&\cong& \Hom_B(\R \E T, X)\\
& \cong & \Hom_B(T,X).
\end{eqnarray*}

$(ii).$
$(a).$ Consider  $(^{\bot}\tau_A T, T^{\bot})$  a hereditary torsion pair for $\mo\,A$. Let $X\in ^{\bot}(\tau_B \R T)$ and $Y$ be a submodule of $X$. Then, we shall show that $Y \in ^{\bot}(\tau_B \R T)$.

Since $X\in ^{\bot}(\tau_B \R T)$, by Proposition \ref{prop_ortog}, we have that $\E X\in ^{\bot}\tau_A  T$.  Then $\E N \in ^{\bot}\tau_A  T$, because $\E N$ is a submodule of $\E M$. Since $^{\bot}\tau_A  T= \F T$, then $\E N \in  \F T$. Thus, $N \in \F \R T=^{\bot}(\tau_B \R T)$.  Therefore $(^{\bot}(\tau_B \R T), (\R T)^{\bot})$ is a hereditary torsion pair for $\mo\,B$.

$(b).$ Suppose  $(^{\bot}\tau_A T, T^{\bot})$ is a splitting torsion pair for $\mo\,A$ and consider $X\in \mo\,B$. Since $\E X \in \mo\,A$, we have that either  $\E X \in ^{\bot}\tau_A T = \F T$ or $\E X \in T^{\bot}$. Therefore, $X\in ^{\bot}(\tau_B \R T) $ or $X\in (\R T)^{\bot}$ and the assertion is shown.

\end{proof}

We end this section  computing the endomorphism algebra of $e T$, when $T$ is a $\tau$-tilting $B$-module. Recall that  $\nu_C=DC\otimes_C\_$  is the \emph{Nakayama functor} for an algebra $C$.

\begin{teo}
Let $T$ be a $\tau$-tilting $B$-module. Then, $\mbox{End}_A eT $ is the one-point extension of $\mbox{End}_BT$ by the module $\Hom_B(T,\nu_B P_0)$.
\end{teo}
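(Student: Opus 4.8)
The plan is to compute $\mbox{End}_A(eT)$ directly, using the decomposition $eT=\E T\oplus S$ coming from Corollary~\ref{cormaps}, and to recognize the resulting algebra as a one-point extension. Every $A$-endomorphism of $eT$ splits into its four components relative to this decomposition, so that as a $k$-algebra
\[
\mbox{End}_A(eT)\;\cong\;\begin{pmatrix}\mbox{End}_A(\E T) & \Hom_A(S,\E T)\\[2pt]\Hom_A(\E T,S) & \mbox{End}_A(S)\end{pmatrix},
\]
with multiplication given by matrix composition. I would first settle the three easy entries and then devote the real effort to identifying the remaining corner with $\Hom_B(T,\nu_B P_0)$.

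For the diagonal, the functor $\E$ is full and faithful, whence it induces a ring isomorphism $\mbox{End}_A(\E T)\cong\mbox{End}_B(T)$; and since $S$ is simple and $k$ is algebraically closed, $\mbox{End}_A(S)\cong k$. For the off-diagonal that must vanish, Proposition~\ref{funcprop}(d) gives $\E T\in S^{\tiny{\mbox{perp}}}$, so that $\Hom_A(S,\E T)=0$. Consequently exactly one off-diagonal survives and the algebra is triangular, which is precisely the shape of a one-point extension of $\mbox{End}_B T\cong\mbox{End}_A(\E T)$ by the bimodule $\Hom_A(\E T,S)$, the new vertex corresponding to the summand $S$.

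The heart of the argument is then the identification $\Hom_A(\E T,S)\cong\Hom_B(T,\nu_B P_0)$. Here I would use that $S$ is injective (as recorded in §1.2), so that $S$ coincides with the injective envelope $D(e_{\mathrm{new}}A)$ of the simple at the new vertex, where $e_{\mathrm{new}}A$ is the (simple) projective right $A$-module at that vertex and $\widetilde P=Ae_{\mathrm{new}}$. The tensor–hom duality $\Hom_A(X,D(e_{\mathrm{new}}A))\cong D(e_{\mathrm{new}}A\otimes_A X)=D(e_{\mathrm{new}}X)\cong D\Hom_A(\widetilde P,X)$ converts maps into $S$ into the dual of maps out of $\widetilde P$. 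Applying the adjunction $(\R,\E)$ together with the computation $\R\widetilde P=e_B A\otimes_A Ae_{\mathrm{new}}=e_B Ae_{\mathrm{new}}=P_0$ yields the chain
\[
\Hom_A(\E T,S)\;\cong\;D\Hom_A(\widetilde P,\E T)\;\cong\;D\Hom_B(\R\widetilde P,T)\;\cong\;D\Hom_B(P_0,T)\;\cong\;\Hom_B(T,\nu_B P_0),
\]
where the final isomorphism is the standard Nakayama/Auslander–Reiten formula $D\Hom_B(P,X)\cong\Hom_B(X,\nu_B P)$ applied to the finitely generated projective $P=P_0$.

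The point requiring most care is not any single isomorphism but their naturality: to conclude that the triangular matrix algebra is \emph{literally} the one-point extension $\mbox{End}_B(T)[\Hom_B(T,\nu_B P_0)]$, I must verify that the $\mbox{End}_B T$-module structure on $\Hom_A(\E T,S)$ induced by composition in $\mbox{End}_A(eT)$ is transported, along the displayed chain, to the natural action on $\Hom_B(T,\nu_B P_0)$; each step (full faithfulness of $\E$, the unit/counit of the adjunction, and the Nakayama isomorphism) is functorial in $T$, so this compatibility is automatic, leaving only the bookkeeping of matching the left/right-module conventions in the paper's definition of $B[P_0]$. I remark finally that the $\tau$-tilting hypothesis on $T$ is not used in this computation—the algebra isomorphism holds for an arbitrary $B$-module $T$—and is imposed only so that $eT$ is the support $\tau$-tilting module supplied by Theorem~\ref{teopares}.
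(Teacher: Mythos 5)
Your proof is correct, and for the crucial entry $\Hom_A(\E T,S)$ it takes a genuinely different route from the paper. The common part is the matrix decomposition of $\mbox{End}_A(\E T\oplus S)$, the vanishing of $\Hom_A(S,\E T)$ via $\E T\in S^{\tiny{\mbox{perp}}}$, and $\mbox{End}_A(\E T)\cong\mbox{End}_B T$ by full faithfulness of $\E$. For the remaining corner the paper works with the Auslander--Reiten sequence $0\rightarrow \tau_AS\rightarrow E\rightarrow S\rightarrow 0$: it shows the injective middle term satisfies $\R E\cong\R(\tau_AS)\cong\nu_BP_0$ by comparing with the sequence obtained from the minimal projective resolution of $S$ under the Nakayama functor, then proves $\Hom_A(\E T,E)\cong\Hom_A(\E T,S)$ by killing $\Hom_A(\E T,\tau_AS)$ and $\Ex_A(\E T,\tau_AS)$, and finally uses $E\in S^{\tiny{\mbox{perp}}}$ together with the adjunction to land in $\Hom_B(T,\nu_BP_0)$. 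You instead observe directly that $S\cong\nu_A\widetilde P=D(e_{\mathrm{new}}A)$ (the right projective at the new vertex being one-dimensional) and compute
\[
\Hom_A(\E T,S)\cong D\Hom_A(\widetilde P,\E T)\cong D\Hom_B(\R\widetilde P,T)=D\Hom_B(P_0,T)\cong\Hom_B(T,\nu_BP_0),
\]
using only the Nakayama adjunction $\Hom_A(-,\nu_AP)\cong D\Hom_A(P,-)$ twice, the $(\R,\E)$ adjunction, and $\R\widetilde P=P_0$. Your argument is shorter and avoids Auslander--Reiten theory altogether; the paper's detour through the AR sequence yields the extra structural fact that the injective $E$ restricts to $\nu_BP_0$, but that is not needed for the statement. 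Your closing remarks --- that naturality in $T$ is what upgrades the vector-space isomorphism to an identification of the bimodule structure, and that the $\tau$-tilting hypothesis plays no role in the computation --- are both accurate and are points the paper passes over in silence.
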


\begin{proof}

Note that

\[\mbox{End}_A eT =\mbox{End}_A (\E T\oplus S)\cong\left(
\begin{array}{lr}
\mbox{End}_A(\E T) & \Hom_A(\E T,S)  \\
\Hom_A(S, \E T)& \mbox{End}_AS  \\
\end{array}
\right). \]

Since $\mbox{End}_AS \cong k$ and $\E T\in S^{\tiny{\mbox{perp}}}$, it is only left to prove that $\Hom_A(\E T, S)\cong \Hom_B(T,\nu_BP_0)$.

Consider the Auslander-Reiten sequence
\begin{equation}\label{ass}
0\rightarrow \tau_AS \rightarrow E \rightarrow S \rightarrow 0
\end{equation}
in $\mo\,A$. By \cite[IV,3.9]{ASS}, $E$ is an injective module. We claim that $\R E\cong \nu_B P_0$. Indeed, applying $\R$ to the sequence (\ref{ass}), we obtain $\R E \cong \R (\tau_A S)$.

On the other hand, consider the projective resolution of $S$,
\[ 0\rightarrow P_0 \rightarrow P \rightarrow S.\]
By \cite[IV,2.4]{ASS}, there exists an exact sequence

\begin{equation}\label{nakayamaseq}
0\rightarrow \tau_AS \nu_AP_0 \rightarrow \nu_AP \rightarrow \nu_A S \rightarrow 0
\end{equation}
where $\nu_AP\cong S$ and $\nu_AP_0=\bigoplus_xI_x^A$, if $P_0=\bigoplus_xP_x^A$ where $P_x^A$ is the indecomposable projective $A$-module at the vertex x. By \cite[Lemma 4.5]{AHT}, $I_x^A=\E I_x^B$. Then, applying the functor $\R$ to (\ref{nakayamaseq}) we obtain that $\R(\tau_AS)\cong \R(\nu_AP_0)\cong \nu_BP_0$. Therefore,
\begin{eqnarray*}
\R E &\cong  &\R (\tau_A S)\\
&\cong & \nu_BP_0.
\end{eqnarray*}
Applying $\Hom_A(\E T, \--)$ to the sequence (\ref{ass}) yields an exact sequence as follows
\[0\rightarrow \Hom_A(\E T,\tau_AS)\rightarrow \Hom_A(\E T, E)\rightarrow \Hom_A(\E T,S)\rightarrow  \Ex_A(\E T, \tau_AS).\]

Since $\mbox{pd}_AS \leq 1$, the Auslander-Reiten formula yields $ \Hom_A(\E T,\tau_AS)=0$. On the other hand, since $\Ex_A(\E T, \tau_AS)\cong D \overline{\Hom}_A(S,\E T)$ and $\Hom_A(S,\E T)=0$, we obtain that $\Ex_A(\E T, \tau_AS)=0$. Thus, $\Hom_A(\E T, E)\cong \Hom_A(\E T,S)$.

Finally, since $E \in S^{\tiny{\mbox{perp}}}$, then
\begin{eqnarray*}
 \Hom_A(\E T, S)&\cong & \Hom_A(\E T, E)\\
 & \cong & \Hom_A( T,\R  E)\\
 & \cong & \Hom_B(T,\nu_BP_0)
 \end{eqnarray*}
proving the result.

\end{proof}

\section{The quiver of support $\tau$-tilting modules}

Now we focus our attention on the quivers of the support $\tau$-tilting modules. We shall compare $Q(\sti\,B)$ and $Q(\sti\,A)$. We aim is to show that the morphism $e$ states in corollary \ref{cormaps} is a full embedding between the posets of support $\tau$-tilting modules. We start with the following Theorem.

\begin{teo}\label{teoembe}
\begin{enumerate}[(a)]
\item The maps  $e: \sti \, B \rightarrow \sti\, A$ and $r: \sti \,A \rightarrow \sti\, B$ are morphisms of posets.
\item An arrow  $\alpha: (M_1,Q_1)\rightarrow (M_2,Q_2)$ in  $Q(\sti\,B)$ induces an arrow
\newline $e\alpha: e(M_1,Q_1)\rightarrow e(M_2,Q_2)$ in $Q(\sti \,A)$.
\end{enumerate}
\end{teo}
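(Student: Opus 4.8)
The plan is to work throughout with the standard identification of a support $\tau$-tilting pair with its torsion class: the pair $(M,Q)$ corresponds to $\F\,M$, and the partial order reads $(M,Q)\le (M',Q')$ if and only if $\F\,M\subseteq \F\,M'$. Since $re=\mbox{id}$ by Corollary \ref{cormaps}, the map $e$ is injective, and for part (a) it suffices to check that $e$ and $r$ preserve these inclusions. I will use two elementary transport facts. First, for a $B$-module $X$ one has $X\in \F\,M$ if and only if $\E X\in \F\,\E M$; this comes from the equivalence $\mo\,B\cong S^{\mbox{perp}}$ together with exactness of $\E$ and $\R\E\cong\mbox{id}$ (Proposition \ref{funcprop}). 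Second, if $Y\in \F\,T$ for an $A$-module $T$, then $\R Y\in \F\,\R T$, which is immediate from exactness of $\R$ applied to an epimorphism $T^a\twoheadrightarrow Y$.

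For $e$, assume $\F\,M_1\subseteq \F\,M_2$. The first fact applied to $M_1\in \F\,M_1\subseteq \F\,M_2$ gives $\E M_1\in \F\,\E M_2\subseteq \F(\E M_2\oplus S)$; since also $S\in \F(\E M_2\oplus S)$ and a torsion class is closed under finite direct sums and quotients, every quotient of a power of $\E M_1\oplus S$ lies in $\F(\E M_2\oplus S)$, that is $\F(\E M_1\oplus S)\subseteq \F(\E M_2\oplus S)$. Thus $e$ preserves order. For $r$, note that $\ad\,\widehat T=\ad\,\R T$, so the torsion class attached to $r(T,P)$ is $\F\,\R T$. If $\F\,T_1\subseteq \F\,T_2$, then $T_1\in \F\,T_2$, so by the second fact $\R T_1\in \F\,\R T_2$; as every object of $\F\,\R T_1$ is a quotient of a power of $\R T_1$, we get $\F\,\R T_1\subseteq \F\,\R T_2$, so $r$ is order-preserving as well.

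For part (b), let $\alpha$ be an arrow $(M_1,Q_1)\to (M_2,Q_2)$ in $Q(\sti\,B)$. By the identification of the exchange quiver with the Hasse quiver, this means $(M_2,Q_2)$ is the left mutation of $(M_1,Q_1)$, so $(M_1,Q_1)>(M_2,Q_2)$ and the two pairs are the two completions of a common basic almost complete support $\tau$-tilting pair $(U,Q)$ for $\mo\,B$, with $|U|+|Q|=|B|-1$. By Theorem \ref{teopares} the pair $(\E U\oplus S,Q)$ is $\tau$-rigid for $\mo\,A$, and the count $|\E U\oplus S|+|Q|=(|U|+1)+|Q|=|B|=|A|-1$ (using that $\E$ is faithful and $\E U\in S^{\mbox{perp}}$, so $S$ is not a summand of $\E U$) shows it is a basic almost complete support $\tau$-tilting pair for $\mo\,A$. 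Whatever the mutation type at the $B$-level — an exchange of a module complement, or the passage of a complement between the module and the projective part — one checks that both $e(M_1,Q_1)$ and $e(M_2,Q_2)$ are completions of $(\E U\oplus S,Q)$, and they are distinct because $e$ is injective.

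Finally I invoke \cite[Theorem 2.18]{AIR}: an almost complete support $\tau$-tilting pair has exactly two completions. Hence $e(M_1,Q_1)$ and $e(M_2,Q_2)$ are precisely those two, so they are mutations of one another and are joined by an arrow in $Q(\sti\,A)$. Since $e$ is order-preserving and injective with $re=\mbox{id}$, the strict inequality $(M_1,Q_1)>(M_2,Q_2)$ forces $e(M_1,Q_1)>e(M_2,Q_2)$, and by Definition \ref{leftmutation} this means $e(M_2,Q_2)$ is the left mutation of $e(M_1,Q_1)$, giving the desired arrow $e\alpha$. I expect the main obstacle to be part (b): the entire difficulty is concentrated in recognizing $(\E U\oplus S,Q)$ as an almost complete pair for $A$ and verifying the count together with the $\tau$-rigidity supplied by Theorem \ref{teopares}; once that is in place, the two-completion theorem forces the single $B$-mutation to become a single $A$-mutation, and the order bookkeeping of part (a) fixes its direction.
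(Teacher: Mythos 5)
Your proposal is correct and follows essentially the same route as the paper: part (a) by transporting an epimorphism from $\ad M_2$ onto $M_1$ (resp.\ from $\ad T_2$ onto $T_1$) along the exact functors $\E$ and $\R$, and part (b) by recognizing $e(U,Q)=(\E U\oplus S,Q)$ as an almost complete support $\tau$-tilting pair via the summand count and concluding from the two-complement theorem that the images of the two completions are joined by an arrow. You are merely a bit more explicit than the paper about why the two completions are distinct (injectivity of $e$ from $re=\mbox{id}$) and about how order preservation fixes the direction of the arrow.
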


\begin{proof}

$(a).$ Let $(M_1,Q_1)$ and $(M_2,Q_2)$ be support $\tau$-tilting pairs for $\mo\,B$ such that $(M_1,Q_1)<(M_2,Q_2)$. We have to prove that $(\E M_1\oplus S,Q_1)<(\E M_2 \oplus S, Q_2)$,  or equivalently, $\F (\E M_1 \oplus S)\subseteq \F(\E M_2 \oplus S)$. Since $\F (\E M_1 \oplus S)=\F (\E M_1)\oplus \F S$, we only have to show that $\F (\E M_1)\subseteq \F (\E M_2)$.

Since $\F M_1 \subseteq \F M_2$,  there exists an epimorphism $f: Z\rightarrow M_1$, with $Z \in \ad M_2$. Applying the exact functor $\E$ to $f$, we obtain an epimorphism $\E f:\E Z \rightarrow \E M_1$, where $\E Z \in \ad \E M_2$. Then, $\E M_1 \in \F (\E M_2)$. Therefore, $\F (\E M_1)\subseteq \F (\E M_2)$.

Conversely, let $(T_1,P_1)$ and $(T_2,P_2)$  be support $\tau$-tilting pairs for $\mo\,A$, such that $(T_1,P_1)<(T_2,P_2)$. We claim that $\R T_1 \in \F \R T_2$. In fact, since $\F T_1 \subseteq \F T_2$, there exists an epimorphism $g: W \rightarrow T_1$, with $W \in \ad T_2$. Applying the exact functor  $\R$ to $g$, we obtain an epimorphism $\R g: \R W\rightarrow \R T_2$, where $\R W \in \ad \R T_2$. Therefore, $\R T_1 \in \F(\R T_2)$.

$(b).$ Let $\alpha: (M_1,Q_1)\rightarrow (M_2,Q_2)$ be an arrow in $Q(\sti \, B)$. Then, there exists an almost complete support $\tau$-tilting pair for $\mo\,B$, let denote it $(U,P)$, which is a direct summand of $(M_1,Q_1)$ and $ (M_2,Q_2)$. Since $e$ is a morphism of posets, we have $e(M_1,Q_1)<e(M_2,Q_2)$. Observe that $e(U,P)=(\E U\oplus S,P)$ is an almost complete support  $\tau$-tilting pair for $\mo\,A$, since
\begin{eqnarray*}
|\E U \oplus S|+|Q|&=&|\E U|+1+|Q|\\
&=&|U|+|Q|+1\\
&=&n-1.
\end{eqnarray*}
Moreover,  $e(U,P)=(\E U\oplus S,P)$ is a direct summand of $e(M_1,Q_1)$ and $e(M_2,Q_2)$. Thus, by definition, we have that
$e(M_2,Q_2)= \mu_{\E X}^{-}e(M_1,Q_1)$. Hence, there exists an arrow $e\alpha: e(M_1,Q_1)\rightarrow e(M_2,Q_2)$ in $Q(\sti \,A)$.

\end{proof}

\begin{obs}
The above theorem  shows  that the Extension functor behaves well respect to the mutation of support $\tau$-tilting modules. In some way, the Extension functor commutes with the mutation.
\end{obs}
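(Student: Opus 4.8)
The plan is to derive both assertions directly from the description of the partial order via the subcategories $\F(-)$ and from the mutation machinery recalled in Section~1, using repeatedly that $\E$ and $\R$ are exact and that $re=\mathrm{id}_{\sti B}$ (Corollary \ref{cormaps}). For part (a) I would first treat $e$. Assume $(M_1,Q_1)<(M_2,Q_2)$, that is $\F M_1\subseteq\F M_2$. Since $S$ is simple injective and $\E M_i\in S^{\mbox{perp}}$, we have $\F(\E M_i\oplus S)=\F(\E M_i)\oplus\F S$, so it is enough to check $\F(\E M_1)\subseteq\F(\E M_2)$. As $M_1\in\F M_1\subseteq\F M_2$ there is an epimorphism $Z\to M_1$ with $Z\in\ad M_2$; applying the exact functor $\E$ gives an epimorphism $\E Z\to\E M_1$ with $\E Z\in\ad(\E M_2)$, whence $\E M_1\in\F(\E M_2)$ and $\F(\E M_1)\subseteq\F(\E M_2)$. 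The argument for $r$ is the mirror image: from $\F T_1\subseteq\F T_2$ and an epimorphism $W\to T_1$ with $W\in\ad T_2$, applying the exact functor $\R$ yields $\R T_1\in\F(\R T_2)$, so $r$ preserves the order as well.

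For part (b), let $\alpha\colon(M_1,Q_1)\to(M_2,Q_2)$ be an arrow of $Q(\sti B)$ and let $(U,P)$ be the common almost complete support $\tau$-tilting pair of which $(M_1,Q_1)$ and $(M_2,Q_2)$ are the two completions. First I would apply $e$ to $(U,P)$ and verify that $e(U,P)=(\E U\oplus S,P)$ is itself almost complete: it is a $\tau$-rigid pair by Theorem \ref{teopares}(a), and since $\E U\in S^{\mbox{perp}}$ the summand $S$ is new, so $|\E U\oplus S|+|P|=|U|+1+|P|=(|B|-1)+1=|A|-1$. Because $\E$ preserves direct-sum decompositions and sends indecomposables to indecomposables, $e(U,P)$ is a direct summand of both $e(M_1,Q_1)$ and $e(M_2,Q_2)$, and by Theorem \ref{teopares}(a) these are genuine support $\tau$-tilting pairs for $\mo A$. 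Injectivity of $e$ (from $re=\mathrm{id}_{\sti B}$) shows they are distinct.

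To finish, I would invoke that a basic almost complete support $\tau$-tilting pair has exactly two complements \cite[Theorem 2.18]{AIR}. Thus the two distinct completions $e(M_1,Q_1)$ and $e(M_2,Q_2)$ of $e(U,P)$ are precisely those two, hence mutations of one another. It then remains only to orient the arrow: an arrow $(M_1,Q_1)\to(M_2,Q_2)$ means $(M_1,Q_1)>(M_2,Q_2)$, so by part (a) and injectivity $e(M_1,Q_1)>e(M_2,Q_2)$, and by Definition \ref{leftmutation} this says $e(M_2,Q_2)$ is a left mutation of $e(M_1,Q_1)$, i.e. there is an arrow $e\alpha\colon e(M_1,Q_1)\to e(M_2,Q_2)$ in $Q(\sti A)$.

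The hard part will be the bookkeeping in part (b): establishing that $e(U,P)$ is \emph{exactly} almost complete, so that the two-complements theorem applies, and that each $e(M_i,Q_i)$ genuinely contains $e(U,P)$ as a direct summand. Both points rest on the facts that $\E$ preserves indecomposable summands and that $S\notin\ad(\E U)$ because $\E U\in S^{\mbox{perp}}$; this also makes the argument uniform, with no case split according to whether the mutation in $\sti B$ changes the module part or the projective part. Once these are secured, the orientation of the image arrow is forced by the order-preservation of part (a) together with the injectivity supplied by $re=\mathrm{id}_{\sti B}$.
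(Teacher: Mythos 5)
Your proposal is correct and follows essentially the same route as the paper's proof of Theorem \ref{teoembe}: part (a) via the epimorphism $Z\to M_1$ with $Z\in\ad M_2$ pushed through the exact functors $\E$ and $\R$, and part (b) via the count $|\E U\oplus S|+|P|=|A|-1$ showing $e(U,P)$ is almost complete and a common direct summand of the two images. Your version is slightly more explicit than the paper in invoking the two-complements theorem, the injectivity coming from $re=\mathrm{id}_{\sti B}$, and the orientation of the image arrow (which the paper states with the inequality reversed), but the argument is the same.
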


\begin{proof}[Proof of Theorem B]
By Theorem \ref{teoembe} and since $re=\mbox{Id}_{\tiny{\sti\,B}}$, the map $e$ is an embedding of quivers. Hence, we only have to show that if there exists an arrow $e(M,P)\rightarrow e(N,Q)$ in $Q(\sti\,A)$, then there exist an arrow $(M,P)\rightarrow (N,Q)$ in $Q(\sti\,B)$.

We know that $e(M,P)=(\E M\oplus S, P)$ and $e(N,Q)=(\E N \oplus S, Q)$. Since there exist an arrow from $e(M,P)$ to $e(N,Q)$, then there is an almost complete support $\tau$-tilting module, $(U,L)$, which is a direct summand of $e(M,P)$ and $e(N,Q)$. Since $S$ is a direct summand of both, then  $S$ is a direct summand of $U$.  Thus $U=U'\oplus S$, with $U'\in S^{\tiny{\mbox{perp}}}$ . Then, $|\R U|+|L|=|\R U'|+|L|=|U'|+|L|=n-2$.  Note that $L$ is a projective $B$-module, since $\Hom_A(L,S)=0$. Therefore, we have that  $(U',L)$ is an almost complete support $\tau$-tilting pair for $\mo\,B$ which is a direct summand of $(M,P)$ and $(N,Q)$. Since $r$ is a morphism of posets, there exists an arrow $(M,P)\rightarrow (N,Q)$ in $Q(\sti\,B)$.
\end{proof}

We illustrate the above theorem  with the following example.

\begin{ej}
Let $B$ be  the algebra given by the quiver
$ \xymatrix{
1 \ar@/_/[r]_{\beta}
& 2 \ar@/_/[l]_{\alpha}
}$
with the relation $\alpha\beta=0$.


We denote all the modules by their composition factors.
Consider $A=B[P_2]$, the one-point of $B$ by the projective $P_2={\small\txt{2\\1}}$. Then $A$ is given by the quiver $ \xymatrix{
1 \ar@/_/[r]_{\beta}
& 2 \ar@/_/[l]_{\alpha}
& 3 \ar[l]_{\gamma}
}$
with relation the $\alpha\beta=0$.


The quiver $Q(\sti\,A)$ is the following

\begin{center}
\psset{unit=5mm,doublesep=5pt}
\begin{pspicture}(-2,-13)(4,14)

\uput{0}[0](0,0){${\small\txt{3}}\oplus{\small\txt{3\\2}}\oplus{\small\txt{3\\2\\1}}$}
\uput{0}[0](4.5,0){${\small\left(2\oplus \txt{3\\2},P_1\right)}$}
\uput{0}[0](-5.5,0){${\small\txt{3}}\oplus{\small\txt{3\,1\\2\\1}}\oplus{\small1}$}
\uput{0}[0](-11,0){${\small\txt{1\\2\\1}}\oplus{\small\txt{3\,1\\2\\1}}\oplus{\small\txt{1}}$}

\uput{0}[0](1,4){${\small\txt{2}}\oplus{\small\txt{3\\2}}\oplus{\small\txt{3\\2\\1}}$}
\uput{0}[0](7,4){$({\small\txt{2}}\oplus{\small\txt{2\\1}},{\small P_3})$}
\uput{0}[0](-5,4){${\small\txt{3}}\oplus{\small\txt{3\,1\\2\\1}}\oplus{\small\txt{3\\2\\1}}$}

\uput{0}[0](-8,8){${\small\txt{1\\2\\1}}\oplus{\small\txt{3\,1\\2\\1}}\oplus{\small\txt{3\\2\\1}}$}
\uput{0}[0](1,8){${\small\txt{2}}\oplus{\small\txt{2\\1}}\oplus{\small\txt{3\\2\\1}}$}
\uput{0}[0](9,8){$\left({\small\txt{1\\2\\1}}\oplus{\small\txt{2\\1}},P_3\right)$}
\uput{0}[0](1,12){${\small\txt{1\\2\\1}}\oplus{\small\txt{2\\1}}\oplus{\small\txt{3\\2\\1}}$}
\uput{0}[0](1,-3.5){${\small\left(3\oplus \txt{3\\2},P_1\right)}$}
\uput{0}[0](6.5,-3.5){${\small\left(2,P_1\oplus P_3\right)}$}
\uput{0}[0](-4.5,-3.5){${\small\left(3\oplus1,P_2\right)}$}
\uput{0}[0](-2.5,-6.3){${\small\left(3,P_1\oplus P_2\right)}$}
\uput{0}[0](-4.5,-8.5){${\small\left(1,P_2\oplus P_3\right)}$}
\uput{0}[0](3,-8.5){${\small\left(0,P_1\oplus P_2\oplus P_3\right)}$}
\uput{0}[0](-4,-11.5){${\small\left(1\oplus{\small\txt{1\\2\\1}},P_3\right)}$}

\psline{->}(0.8,12)(-6.2,9.5)
\psline{->}(2.5,11)(2.5,9.3)
\psline{->}(4.2,12)(10,9.5)
\psline{->}(-5.8,6.7)(-3.1,5.5)
\psline{->}(2.5,6.7)(2.5,5.1)
\psline{->}(3.2,6.7)(8,5.1)
\psline{->}(11,6.7)(8.7,5.1)
\psline{->}(-7.2,6.7)(-9.3,1.2)
\psline{->}(3.2,2.8)(6,1.2)
\psline{->}(2.5,2.8)(2.5,1.2)

\psline{->}(-3.1,2.8)(-3.7,1.4)
\psline{->}(-2.4,2.8)(1.8,1.2)
\psline{->}(-3.7,-1.4)(-3.7,-2.5)
\psline{->}(2.5,-1.4)(2.5,-2.3)
\psline{->}(6.2,-1.4)(3.8,-2.3)
\psline{->}(7.2,-1.4)(7.2,-2.3)
\psline{->}(9,3)(9,-2.3)
\psline{->}(-7.3,0.1)(-5.8,0.1)
\psline{->}(-1.5,-4.2)(-1.5,-5.5)
\psline{->}(2.5,-4.4)(0,-5.5)
\psline{->}(-3.7,-4.2)(-3.7,-8)
\psline{->}(-0.5,-8.5)(3,-8.5)
\psline{->}(0,-6.7)(4,-8)
\psline{->}(7.2,-4.2)(7.2,-8)
\psline{->}(-2.5,-10.2)(-2.5,-9)
\pscurve[arrows=->](-9.5,-1.5)(-8,-5)(-4.2,-11.5)
\pscurve[arrows=->](11.7,6.7)(11.7,-8.5)(7.2,-10.5)(0,-11.5)

\psccurve[linestyle=dashed](-5.3,4)(-5.8,0)(-3.9,-7.1)(-1,-7.5)(3,-6.5)(5.5,-3.5)(0.4,3.8)(-3,6)

\end{pspicture}
\end{center}

Then, the image of the quiver $Q(\sti\,B)$  under $e$ is the subquiver indicated by dotted lines.
\end{ej}

For the remainder of this section, we state some technical results about the local behaviour of $Q(\sti\,A)$. We are interested to know when the image of $e$ is closed by successors. The next theorem gives us an answer for a particular case.

\begin{teo}\label{teoimagen}
Let $(T,P)$ and $(T',P')$ be  basic  support $\tau$-tilting pairs for $\mo\,A$ such that there exists an arrow  $(T,P)\rightarrow (T', P')$ in $Q(\sti\,A)$.
If $(T,P)=e(M,Q)$ and $\Hom_A(\E M, S)\neq 0$ then there exists a  support $\tau$-tilting pair $(N,R)$ in $\sti \,B$ such that $(T', P')=e(N,R)$.

\end{teo}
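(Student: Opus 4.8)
The plan is to show that the left mutation giving the arrow cannot occur at the summand $S$, so that $S$ survives as a summand of $T'$, and then to recognise $(T',P')$ as lying in the image of $e$.

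First I would record that the arrow $(T,P)\to(T',P')$ is, by definition, a left mutation at a single indecomposable summand $X$ of the pair $(T,P)=(\E M\oplus S,Q)$. Whatever $X$ is, every module summand of $T$ other than $X$ is untouched, so as long as $X\neq S$ the module $S$ remains a summand of $T'$. The key use of the hypothesis is precisely to exclude $X=S$. Since $S$ is simple and $\Hom_A(\E M,S)\neq 0$, the module $S$ is a quotient of $\E M$, hence $S\in\F(\E M)$. Taking $\E M$ as the complement of $S$ in $T$, Definition \ref{leftmutation}(b) then tells us that the mutation of $T$ at $S$ is a \emph{right} mutation, i.e.\ it moves upward in the poset; consequently it is not the downward (left) mutation described by our arrow. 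Therefore $X\neq S$, and $S$ is a direct summand of $T'$.

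Next I would pin down the module part of $T'$. Because $T'$ is $\tau$-rigid, Proposition \ref{propfac}(b) says $T'$ is Ext-projective in $\F\,T'$, in particular $\Ex_A(T',T')=0$, and since $S\mid T'$ this yields $\Ex_A(S,T')=0$. By Remark \ref{obsimp} we may then write $T'=W\oplus S^{r}$ with $W\in S^{\tiny{\mbox{perp}}}$; as $T'$ is basic and $S\mid T'$ we get $r=1$, so $T'=W\oplus S$ with $W\in S^{\tiny{\mbox{perp}}}$. By Proposition \ref{funcprop}(c) the unit $\delta_W\colon W\to\E\R W$ is an isomorphism, so setting $N=\R W$ gives $W\cong\E N$ and hence $T'\cong\E N\oplus S$. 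It then remains to control the projective part, i.e.\ to prove $\widetilde{P}\nmid P'$. Were $\widetilde{P}$ a summand of $P'$, the pair $(T',P')$ being $\tau$-rigid would force $\Hom_A(\widetilde{P},T')=0$; but $\widetilde{P}$ is the indecomposable projective at the new vertex, whose top is $S$, so $\Hom_A(\widetilde{P},T')$ computes the multiplicity of $S$ as a composition factor of $T'$, which is nonzero because $S$ is a summand of $T'$ — a contradiction. Hence $\widetilde{P}\nmid P'$, so writing $R=P'$ the description of $r$ in Corollary \ref{cormaps} gives $(N,R)=r(T',P')$, a support $\tau$-tilting pair for $\mo\,B$ by Theorem \ref{teopares}(b), and $e(N,R)=(\E N\oplus S,R)=(T',P')$, as required.

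The only genuinely delicate point is the first step: the entire statement hinges on the observation that the hypothesis $\Hom_A(\E M,S)\neq0$ is exactly the condition $S\in\F(\E M)$ that forces the mutation at $S$ to go \emph{upward}, so that a downward mutation must occur at a different summand and therefore preserve $S$. Once $S\mid T'$ is secured, the remaining steps are formal consequences of $\tau$-rigidity, Remark \ref{obsimp}, and the equivalence $\mo\,B\simeq S^{\tiny{\mbox{perp}}}$, and indeed make clear why the hypothesis is needed: when $\Hom_A(\E M,S)=0$ the mutation at $S$ could itself be the left mutation, removing $S$ and potentially leaving the image of $e$.
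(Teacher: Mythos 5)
Your proposal is correct and follows essentially the same route as the paper's proof: use $\Hom_A(\E M,S)\neq 0$ to get $S\in\F(\E M)$, conclude via Definition \ref{leftmutation} that the left mutation cannot occur at $S$ so that $S$ survives as a summand of $T'$, deduce $T'=S\oplus W$ with $W\in S^{\tiny{\mbox{perp}}}$ from $\tau$-rigidity and basicness, and observe that $P'$ has no summand $\widetilde{P}$. Your write-up is in fact somewhat more explicit than the paper's (e.g.\ in justifying $\widetilde{P}\nmid P'$ via the top of $\widetilde{P}$), but there is no substantive difference.
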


\begin{proof}

Let $(T,P)=e(M,Q)$ be a support $\tau$-tilting pair for $\mo\,A$ such that $\Hom_A(\E M, S)\neq 0$. Then, by Shur's Lemma $S\in \F (\E M)$. We claim that $S$ is a direct summand of $T'$ where $(T',P')$ is a support $\tau$-tilting pair such that there exists an arrow from $(T,P)$ to $(T',P')$ in $Q(\sti\,A)$. In fact, otherwise $(T',P')=\mu_S(T,P)$. Moreover, since there exists an arrow from $(T,P)$ to $(T',P')$ in $Q(\sti \,A)$ then $(T',P')=\mu_S^{-}(T,P)$. Therefore, it follows by Definition \ref{leftmutation} that $S \notin \F (\E M)$, which is a contradiction. Hence, $T'=S\oplus Y$.

Since $S\oplus Y$ is a basic $\tau$-rigid module, then $\mbox{Ext}_A^1(S,Y)=0$ and $\Hom_A(S,Y)=0$. Then $Y\in S^{\tiny{perp}}$ and therefore $Y \cong \E \R Y$. Furthermore, since $\Hom_A(P',S\oplus Y)=0$ we have that $P'$ is a projective $B$-module. Considering  the support $\tau$-tilting pair $(\R Y, P')$ we obtain the result.


\end{proof}

The following example shows that the condition $\Hom_A(\E M,S)\neq 0$ in Theorem \ref{teoimagen} can not be remove.

\begin{ej}

Consider the following algebras:

\begin{eqnarray*}
B:
{
    \xymatrix  @!0 @R=0.7cm  @C=0.8cm {
       1& &     \\
       &3\ar[ul]^{\beta} \ar[dl]^{\gamma} &4 \ar[l]^{\alpha}   \\
      2 & &  }
} && \hspace{10mm}
B[P_3]:
{
    \xymatrix  @!0 @R=0.7cm  @C=0.8cm {
       1& &  4\ar[dl]_{\alpha}   \\
       &3\ar[ul]_{\beta} \ar[dl]^{\gamma} &   \\
      2 & & 5\ar[ul]^{\delta} }
} \\
 \alpha \beta =0 && \hspace{35mm}\alpha \beta =0
\end{eqnarray*}

It is not hard to see that $(1 \oplus 4, P_2\oplus P_3)$ is an almost complete support $\tau$-tilting pair for $\mo\,A$ and their complements are $(5 ,0)$ and $ (0,P_5)$. Moreover, there exists an arrow $(1 \oplus 5 \oplus 4, P_2\oplus P_3)\rightarrow (1\oplus 4,P_2 \oplus P_3\oplus P_5)$ in $Q(\sti\,A)$.

Note that a support $\tau$-tilting pair, $(U,P)$,  belongs to the image of $e$ if and only if $S$ is a direct summand of $U$. Then,  $(1 \oplus 5 \oplus 4, P_2\oplus P_3)$ belongs to the image of $e$, but  $(1\oplus 4,P_2 \oplus P_3\oplus P_5)$ does not belong to the image of $e$.
\end{ej}

Suppose we have a pair $(M,Q)$ in $Q(\sti\,A)$ which belongs to the image of $e$. Then, the following result gives information about the predecessors of $(M,Q)$.

\begin{teo}\label{teopredecesores}
Let $(T,P)$ be a support $\tau$-tilting pair such that there exists a support $\tau$-tilting pair $(M,Q)$ in $\sti\,B$ with $(T,P)=e(M,Q)=(\E M \oplus S, P)$. Then there is exactly one immediate predecessor of $(T,P)$ in $Q(\sti\,A)$ which does not belong to the image of $e$ if and only if $\Hom_A(\E M, S)\neq 0$.
\end{teo}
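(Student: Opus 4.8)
The plan is to classify the immediate predecessors of $(T,P)$ in $Q(\sti\,A)$ according to whether the simple $S$ survives as a direct summand, and to show that exactly one predecessor loses $S$ precisely when $\Hom_A(\E M,S)\neq 0$. Recall from the observation preceding Theorem \ref{teoimagen} that a support $\tau$-tilting pair $(U,L)$ lies in the image of $e$ if and only if $S$ is a direct summand of $U$; hence a predecessor of $(T,P)$ fails to belong to the image of $e$ exactly when $S$ is not a summand of its module part. Since $Q(\sti\,A)$ is $n$-regular and coincides with the Hasse quiver of $\sti\,A$, the immediate predecessors of $(T,P)$ are precisely the right mutations of $(T,P)$, that is, the mutations $(T'',P'')$ of $(T,P)$ with $(T'',P'')>(T,P)$.

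First I would observe that, writing $(T,P)=(\E M\oplus S,P)$, the only mutation of $(T,P)$ that can destroy the summand $S$ is the mutation $\mu_S(T,P)$ at $S$ itself. Indeed, a mutation alters exactly one indecomposable summand of the pair: mutating at an indecomposable summand of $\E M$, or at an indecomposable summand of $P$, leaves $S$ untouched, so the resulting pair still has $S$ in its module part and therefore lies in the image of $e$. For the mutation at $S$, the almost complete pair obtained by deleting $S$ has exactly two complements by \cite[Theorem 2.18]{AIR}, so $\mu_S(T,P)$ replaces $S$ by a different complement; as $S$ is not projective, $S$ is not a summand of the module part of $\mu_S(T,P)$. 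Thus $\mu_S(T,P)$ is the unique mutation of $(T,P)$ whose module part omits $S$, and it is the only candidate for a predecessor outside the image of $e$.

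It then remains to decide when $\mu_S(T,P)$ is actually a predecessor, i.e.\ a right mutation. Writing $T=S\oplus U$ with $U=\E M$, Definition \ref{leftmutation} says that $\mu_S(T,P)$ is a right mutation (equivalently $\mu_S(T,P)>(T,P)$, an immediate predecessor) if and only if $S\in\F(\E M)$, and is a left mutation, hence a successor, otherwise. Finally, by Schur's Lemma $S\in\F(\E M)$ if and only if $\Hom_A(\E M,S)\neq 0$, exactly as invoked in the proof of Theorem \ref{teoimagen}. Combining these facts: if $\Hom_A(\E M,S)\neq 0$ then $\mu_S(T,P)$ is the one and only immediate predecessor of $(T,P)$ not in the image of $e$; whereas if $\Hom_A(\E M,S)=0$ then $\mu_S(T,P)$ is a successor and every immediate predecessor retains $S$, so none lies outside the image of $e$. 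This establishes the claimed equivalence.

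The main obstacle, as I see it, is the bookkeeping step of the second paragraph: justifying cleanly that the mutation at $S$ genuinely removes $S$ from the module part (so that it is the unique predecessor leaving the image of $e$), together with the translation, via Definition \ref{leftmutation} and Schur's Lemma, of the right-mutation condition $S\in\F(\E M)$ into the homological condition $\Hom_A(\E M,S)\neq 0$. Once these are in place, the count of predecessors inside versus outside the image of $e$ is immediate.
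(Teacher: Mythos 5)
Your proposal is correct and follows essentially the same strategy as the paper: both arguments rest on the two observations that only the mutation at the summand $S$ can produce a pair whose module part omits $S$ (hence lies outside the image of $e$), and that this mutation is an immediate predecessor rather than a successor precisely when $S\in\F(\E M)$, i.e.\ when $\Hom_A(\E M,S)\neq 0$. The paper phrases the two implications by contraposition and contradiction while you give a direct classification, but the content is the same.
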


\begin{proof}

Suppose that there is exactly one immediate predecessor of $(T,P)$ in $Q(\sti\,A)$ which does not belong to the image of $e$ and assume $\Hom(\E M, S)=0$. Then, $S \notin \F (\E M)$. By definition $\mu_S(T,P)$ is a left mutation of $(T,P)$ and there exists an arrow from $(T,P)$ to $\mu_S(T,P)$ in $Q(\sti \,  A)$. Therefore, all the predecessors $(T',P')$ of $(T,P)$ satisfy that $T'=S\oplus M$ with $M \in S^{\tiny{perp}}$. Then, all the predecessors belong to the image of $e$, which is a contradiction.

Conversely, let  $(T,P)\in \sti\,A$ such that  $(T,P)=(\E M \oplus S, P)$ and $\Hom_A(\E M, S)\neq 0$. We  show that  there is only one immediate predecessor of $(T,P)$ which does not have $S$ as a direct summand.

By definition of $Q(\sti\,A)$,  there is at most  one immediate predecessor of  $(T,P)$ such that $S$ is not a  direct summand.  Assume that all immediate predecessors of $(T,P)$ have the simple $S$ as a direct summand. Then there exists an immediate successor  of $(T,P)$, let say $(T',P')$ in $Q(\sti\,A)$, such that $S$ is not a direct summand of $(T',P')$. Thus, by construction, we have $(T',P')=\mu_{S}^{+}(T,P)$. It follows by Definition \ref{leftmutation} that $S \notin \F (\E M)$ and thus $\Hom_A(\E M, S)=0$, which is a contradiction. Therefore, we prove that there is exactly one immediate predecessor of $(T,P)$ such that $S$ is not a direct summand.

Conversely,
\end{proof}








We end up this section showing an example that if we extend by a non-projective module, then neither the restriction nor the extension define maps between the corresponding posets of support $\tau$-tilting modules.

\begin{ej}
Let $B$ the following algebra

$$
   \xymatrix  @!0 @R=0.7cm  @C=0.8cm {
       1& &     \\
       &3\ar[ul]^{\beta} \ar[dl]^{\gamma} &4 \ar[l]^{\alpha}   \\
      2 & &  }
$$

and let $A=B[X]$, where $X=\small{\txt{3\\2}}$. Then $A$ is given by the quiver

$$\xymatrix  @!0 @R=0.7cm  @C=0.8cm {
       1& &  4\ar[dl]_{\alpha}   \\
       &3\ar[ul]_{\beta} \ar[dl]^{\gamma} &   \\
      2 & & 5\ar[ul]^{\delta} }$$

with the relation $\delta \beta =0$.

\begin{enumerate}[(a)]
\item Extending the $\tau$-tilting $B$-module $M=\small{\txt{4\\3\\2}\oplus \txt{3}\oplus \txt{4\\3}\oplus \txt{4\\3\\1}}$ we get the $A$-module $eM=\small{\txt{45\\3\\2}\oplus \txt{5\\3}\oplus \txt{45\\3}\oplus \txt{4\\3\\1}\oplus \txt{5}}$ which is not $\tau$-tilting because $\Hom_A(\small{\txt{4\\3\\1}},\tau_A\,5) \neq 0$.
\item Restricting the $\tau$-tilting $A$-module $T=4\oplus 5 \oplus \small{\txt{45\\3}\oplus\small{\txt{45\\3\\2}}\oplus1}$ yields the $B$-module $\R T= 4 \oplus \small{\txt{4\\3}\oplus\small{\txt{4\\3\\2}}\oplus1}$ which is not $\tau$-tilting because $\Hom_B(1,\tau_B\,\,\small{\txt{4\\3\\2}})\neq 0$.
\end{enumerate}

\end{ej}

\end{document}